  \def\rep#1{(\ref{#1})}                                                        % needed if you want to
\newtheorem{defini}{Definition}
\newtheorem{thm}{Theorem}
\newtheorem{remark}{Remark}
\newtheorem{pro}{Proposition}
\title{\LARGE \bf
%The influence of Smart Meter in the\\ Stability of Electricity Market}
Perturbation Analysis of the Wholesale Energy Market Equilibrium in the Presence of Renewables}
\author{Arman Kiani and Anuradha Annaswamy
\thanks{This work was supported by the Technische Universit\"at M\"unchen - Institute for
Advanced Study, funded by the German Excellence Initiative and by
Deutsche Forschungsgemeinschaft (DFG) through the TUM International
Graduate School of Science and Engineering (IGSSE).}
\thanks{Arman Kiani is with the Institute of Automatic Control
  Engineering, Technische Universit\"at M\"unchen, D-80290 Munich,
  Germany. {\tt\small arman.kiani@tum.de}}%
\thanks{Anuradha Annaswamy is with the Department of Mechanical
  Engineering, Massachussets Institute of Technology, Cambridge, MA
  02319, USA. {\tt\small aanna@mit.edu}}
}
\begin{document}

%\markboth{IEEE TRANSACTIONS ON POWER SYSTEMS,~Vol.~XX, No.~XX, XXXXXX~2011}{kIANI \MakeLowercase{\textit{et al.}}:Perturbation Analysis of the Wholesale Energy Market Equilibrium in the presence of Renewables}

\maketitle
\thispagestyle{empty}
\pagestyle{empty} 
%\fancyfoot[RO, LE] {\thepage}
%\thispagestyle{fancy}
%\thispagestyle{plain}
%\pagestyle{plain}

%%%%%%%%%%%%%%%%%%%%%%%%%%%%%%%%%%%%%%%%%%%%%%%%%%%%%%%%%%%%%%%%%%%%%%%%%%%%%%%%
\begin{abstract}
 One of the main challenges in the emerging smart grid is the
 integration of renewable energy resources (RER).
The latter introduces both intermittency and uncertainty into the grid, both of which can affect the underlying energy market. An interesting concept that is being explored for mitigating the integration cost of RERs is Demand Response. Implemented as a time-varying electricity price in real-time, Demand Response has a direct impact on the underlying energy market as well. 
Beginning with an overall model of the major market participants together with the constraints of transmission and generation, we analyze the energy market in this paper and derive conditions for global maximum using standard KKT criteria.  The effect of uncertainties in the RER on the market equilibrium is then quantified, with and without real-time pricing. Perturbation analysis methods are used to compare the equilibria in the nominal and perturbed markets. These markets are also analyzed using a game-theoretic point of view. Sufficient conditions are derived for the existence of a unique Pure Nash Equilibrium in the nominal market. The perturbed market is analyzed using the concept of closeness of two strategic games and the equilibria of close games. This analysis is used to quantify the effect of uncertainty of RERs and its possible mitigation using Demand Response. Finally numerical studies are reported using an IEEE 30-bus to validate the theoretical results.
%

%

%In this paper, game theory and convex optimization tools such as linear complementarity and standard KKT criteria are used to derive electricity market equilibrium. Beginning with an overall model of the major market participants together with the constraints of transmission and generation, the energy market is analyzed and sufficient conditions are derived for the existence of a unique Pure Nash Equilibrium in the nominal market. The effect of uncertainties in the RER on the market equilibrium is quantified, with and without real-time pricing. Perturbation analysis methods are used to compare the equilibria in the nominal and perturbed cases. We next define formally the closeness of two strategic game and relate the equilibrium of close games using the notion of $\alpha-\text{approximation}$	and	$\epsilon-\text{equilibrium}$ to quantify the effect of Demand Response and uncertainty of RERs.  Finally numerical studies are reported using an IEEE 30-bus to validate the theoretical results.
\end{abstract}

%%%%%%%%%%%%%%%%%%%%%%%%%%%%%%%%%%%%%%%%%%%%%%%%%%%%%%%%%%%%%%%%%%%%%%%%%%%%%%%%
%\input{intro}
%$\sum \quad \forall$
%$\sum \qquad \forall$
%$\sum \; \forall$

%%%%%%%%%%%%%%%%%%%%%%%%%%%%%%%%%%%%%%%%%%%%%%%%%%%%%%%%%%%%%%%%%%%%%%%%%%%%%%%%%
%\section{nomenclature}
%\begin{table}[]
%\begin{tabular}{cc}
%\textbf{Variables:} & \\
\makenomenclature
\nomenclature{$P_{Gib}$}{Power block $b$ that the generating unit $i$ is producing}
\nomenclature{$P_{Djk}$}{Power block $k$ that the demand $j$ is consuming}
%$P_{Gib}(t)$& Power block $b$ that the generating unit $i$ is producing in period $t$\\
%%$ c_{Gi}^{su} (t)$ &  start-up cost of the unit $i$ at time $t$ \\
%%$c_{Gi}^{sd} (t) $ &  shut-down cost of the unit $i$ at time $t$\\
%$P_{Djk}(t)$ & Power block $k$ that the demand $j$ is consuming in period $t$ \\
\nomenclature{$\delta_n$}{voltage angle of bus $n$}
\nomenclature{$ \rho_n$}{Locational marginal price corresponding to the generating unit $i$ or the demand $j$ that is located at node $n$ }
%$\delta_n(t)$ & voltage angle of bus $n$ in period $t$ \\
%$ \rho_n (t)$ & Locational marginal price corresponding to \\
% & the generating unit $i$ or the demand $j$ in period $t$ that is located at node $n$ \\
%\end{tabular}
%\end{table}
%\end{flushleft}
%\textbf{Dual Variables:}&\\
%\begin{table}[h]
%\begin{tabular}{cc}
\nomenclature{$\alpha_i$}{Dual variable associated with the maximum capacity constraint of generating unit $i$}
\nomenclature{$\phi_{ib}$}{Dual variable associated with the maximum capacity limit for block $b$ of generating unit $i$}
\nomenclature{$\sigma_j$}{Dual variable associated with the minimum demand constraint of demand $j$}
\nomenclature{$ \psi_{jk}$}{Dual variable associated with the maximum capacity limit for block $k$ of demand $j$}
\nomenclature{$ \gamma_{nm}$}{Dual variable associated with the transmission capacity constraint of line $n- m$}

%
%$\alpha_i$ & Dual variable associated with the maximum capacity constraint\\
%&of generating unit $i$ in period $t$ \\
%$\phi_{ib}$ & Dual variable associated with the maximum capacity limit for\\
%&block $b$ of generating unit $i$ in period $t$\\
%$\sigma_j(t)$ & Dual variable associated with the minimum demand constraint\\
%&of demand $j$ in period $t$\\
%$ \psi_{jk}(t)$ & Dual variable associated with the maximum capacity limit for\\
%&block $k$ of demand $j$ in period $t$\\
%$ \gamma_{nm}(t)$ & Dual variable associated with \\
%& the transmission capacity constraint of line $n- m$ in period $t$\\
%\end{tabular}
%\end{table}\\
%\textbf{Binary Variables:} & \\
%%\begin{table}[h]
%%\begin{tabular}{cc}
%$v_i(t)$ & On / off status of generating unit $i$ in period $t$\\
%& ($1$ if generating unit $i$ is on at hour $t$ and $0$ otherwise)\\
%%\end{tabular}
%%\end{table}\\
%\textbf{Constants:} & \\
%\begin{table}[h]
%\begin{tabular}{cc}

\nomenclature{$B_{nm}$}{susceptance of line $n-m$}
\nomenclature{$P_{Gi}^{max}$}{Maximum power output of generating unit $i$}
\nomenclature{$P_{Gi}^{min}$}{Minimum power output of generating unit $i$}
\nomenclature{$P_{Gib}^{max}$}{Maximum power output in block $b$ of generating unit $i$}

\nomenclature{$P_{Djk}^{min}$}{Minimum power supplied to the demand $j$}
\nomenclature{$P_{Djk}^{max}$}{Maximum power demanded in block $k$ of demand $j$}
\nomenclature{$P_{nm}^{max}$}{Transmission capacity limit of line $n-m$}

%$B_{nm}$ & susceptance of line $n-m$ \\
%%$C_{Gi}^{fx}$ &  Fixed cost coefficient of generating unit $i$ \\
%% $C_{Gi}^{su} (t)$ &  Constant start up cost of unit $i$ \\
%% $C_{Gi}^{sd} (t)$ & Constant shut down cost of unit $i$\\
% $P_{Gi}^{max}$ &  Maximum power output of generating unit $i$ \\
% $P_{Gi}^{min}$ & Minimum power output of generating unit $i$\\
% $P_{Gib}^{max}$ & Maximum power output in block $b$ of generating unit $i$ in period $t$\\
% $R_{i}^{dn}$ & Ramp-down limit of generating unit $i$\\
% $R_{i}^{sd}$ & Shut-down ramp limit of generating unit $i$\\
% $R_{i}^{su}$ & Start-up ramp limit of generating unit $i$ \\
% $R_{i}^{up}$ & Ramp-up limit of generating unit $i$\\
%$P_{Djk}^{min}(t)$ & Minimum power supplied to the demand $j$ in period $t$\\
%$P_{Djk}^{max}(t)$ & Maximum power demanded in block $k$ of demand $j$ in period $t$ \\
%$P_{nm}^{max}$ & Transmission capacity limit of line $n-m$\\

\nomenclature{$\lambda_{Djk}^B$}{Price bid by demand $j$ to buy power block $k$}
\nomenclature{$\lambda_{G_{ib}}^B$}{Price bid by generating unit $i$ to sell power block $b$}
\nomenclature{$\lambda_{D_{jk}}^U$}{Marginal utility associated with block $k$ of demand $j$}
\nomenclature{$\lambda_{G_{ib}}^C$}{Marginal operating cost associated with block $b$ of generator $i$}

\nomenclature{$N_D$}{Number of demands}
\nomenclature{$N_{Dj}$}{Number of blocks demanded by demand $j$}
\nomenclature{$N_{G}$}{ Number of generating units}
\nomenclature{$N_{Gi}$}{Number of blocks bid by generating unit $i$}
\nomenclature{$N_T$}{Number of time periods}

%$\lambda_{Djk}^B(t)$ & Price bid by demand $j$ to buy power block $k$ in period $t$\\
% $\lambda_{G_{ib}}^B$ & Price bid by generating unit $i$ to sell power block $b$ in hour $t$ \\
% $\lambda_{D_{jk}}^U(t)$ &  Marginal utility associated with block $k$ of demand $j$ at period $t$\\
%$\lambda_{G_{ib}}^C$ &  Marginal operating cost associated with block $b$ of generator $i$ at period $t$\\
%$N_D$ & Number of demands\\
%$N_{Dj}$ & Number of blocks demanded by demand $j$ \\
%$N_{G}$ & Number of generating units \\
%$N_{Gi}$ & Number of blocks bid by generating unit $i$\\
%$N_T$ & 	Number of time periods\\
%\end{tabular}
%\end{table}\\ 
%\textbf{Sets:} & \\
%\begin{table} [h]
%\begin{tabular}{cc}
\nomenclature{$D$}{Number of demands}
\nomenclature{${\cal G}$}{Set of indices of generating units $\{1,2, \ldots , N_G\}$}
\nomenclature{${\cal T}$}{ Set of considered time periods $\{T_1, T_2,  \ldots , T_{N_T}\}$}
\nomenclature{$\Omega_n$}{Set of indices of nodes connected to node $n$}
\nomenclature{ $\theta$ }{Set of indices of generating units at node $n$}
\nomenclature{ $\vartheta$ }{Set of indices of demands at node $n$}
%
%$D$ & Set of indices of demands $ \{1,\ldots , N_D\} $\\
%${\cal G}$ & Set of indices of generating units $\{1,\ldots , N_G\}$ \\
%%$N$ & Set of nodes $\{1, \ldots , N_N\}$ \\
%${\cal T}$ & Set of considered time periods $\{1, \ldots , N_T\}$ \\
%$\Omega_n$ & Set of indices of nodes connected to node $n$\\
% $\theta$ & Set of indices of generating units at node $n$\\
% $\vartheta$ & Set of indices of demands at node $n$ 
%\end{tabular}
%\end{table}

\printnomenclature
%%%%%%%%%%%%%%%%%%%%%%%%%%%%%%%%%%%%%%%%%%%%%%%%%%%%%%%%%%%%%%%%%%%%%%%%%%%%%%%%%%
\section{Introduction}
The goal in a Smart Grid is to integrate large-scale renewable generation and emerging storage technologies, control signals to loads to match supply, and dramatically improve energy efficiency. Electricity markets, the entity that carries out power balance by scheduling power using bids from various generating companies, are crucial components that can contribute to an efficient smart grid. Electricity market models that accurately represent the market dynamics are exceedingly important. These models must capture the behavior of the dominant market players such as generators, consumers, and ISO, market mechanisms such as gaming behaviors (ex. bidding strategies), real-time prices via Demand Response, diverse dynamic drivers (e.g., weather, load, fuel prices, and wind supply), and physical constraints (e.g., ramping, transmission congestion) \cite{kiani2011}. These market models and analysis of the market equilibrium directly help to identify various sources of price volatility and dependence and sensitivity to uncertainties and intermittencies in renewable energy sources and variations in real-time prices due to Demand Response. Our focus in this paper is on an electricity market model that captures the dynamics of market players, physical constraints, perturbations in renewable energy resources (RER), and variations due to demand response. 

Various methods have been proposed in the literature to determine market models \cite{Oren2008, Oren2007, conejo2009}. Given the primary purpose of market balance, these methods are primarily focused on the analysis of market equilibrium. Methods in \cite{Oren2008, Oren2007} model the electricity market participants subject to spot market equilibrium and their own constraints, leading to a method termed Equilibrium Programming with Equilibrium Constraints (EPECs). In such models, each agent solves a Mathematical Program with Equilibrium Constraints (MPEC) \cite{conejo2009}, and by using stationarity theory for MPECs a standard mixed complementarity problem (CP) is shown to lead to equilibrium \cite{hu2007}.

Another approach to market modeling employs a variational and complementarity Problem (CP) formulation (see \cite{Hobbs2001, Hobbs2007, Hobbs2000, Metzler2003}) which models the equilibrium of the electricity market where forward and spot decisions are made simultaneously. These models may also be specified by the ISOÕs objective,  maximization of social welfare \cite{Oren2008, Oren2007} or maximization of wheeling or transmission revenue \cite{Hobbs2001, Metzler2003}.  A single-settlement framework based on a linear complementarity problem (LCP) is proposed in  \cite{conejo2006}, which leads to a strategic game in which players have coupled constraints and the equilibrium properties are analyzed.
Yet another approach used for electricity markets is the use of Cournot models.  Here, the strategies of the generator companies are allowed to depend on other market participants, who also act so as to maximize their own profits. This in turn allows a more transparent dependence of costs of generators and consumersÕ profit on the market equilibrium (see \cite{Berry99, Cardell97, Hobbs2000, Nasser98, Wei99, Cunningham02, kiani2011, kianicdc2010, Alvarado2000, Tang2001}).

In the context of a market for smart grid, the question that needs to be addressed is how the effect of RERs can be best captured in a market model. The integration of RER poses the challenge of intermittency and uncertainty, both of which in turn can affect the economic planning and operation of the overall power system. The uncertainties can impose costs due to the limited-dispatchability of intermittent generation \cite{Fabbri2005}, the variability in resource availability, and errors in forecasting resource availability and loads \cite{DeMeo2005, smith2007,DeMeo2007}. There is a considerable literature covering many important aspects of wind power ranging from comprehensive integration studies, forecasting methods, and technology issues (see \cite{hamid2010, Bott2009, Bouffard2008, DeMeo2005, smith2007}). In the specific context of the incorporation of renewable energy in a market, the results of \cite{ Sioshansi2009, Bouffard2005, Bouffard22005, Bitar11, Hetzer2008, conejowind2009} are more pertinent, all of which deal with a perturbed market due to RER. In these papers, a stochastic framework is used to capture both overestimation and underestimation of available wind power on the optimal expected profit of wind power producer, the overall market efficiency, and the overall operation cost. In this paper, we utilize a deterministic version of the the model developed in \cite{Hetzer2008} and analyze the perturbed market and quantify the effect of wind uncertainty on the market equilibrium.

In recent years, there is a significant interest in Demand Response, a concept that allows the electricity demand to response to fluctuations in the electricity price \cite{Sioshansi2009}, \cite{hogan2009}. One way to reduce wind integration costs is to introduce demand response in the form of time-variant retail electricity rate, such as real-time pricing (RTP). RTP can potentially reduce wind integration and forecast error costs, since consumer demand could be made to follow the supply of wind generation by using a price signal. Under RTP, if available wind generation is less than forecast, the high cost of deploying ancillary services to cover the generation shortfall can reduce electricity demand and the cost of serving the load \cite{Sioshansi2010}. Similarly, because wind generation has zero marginal cost, electricity demand may increase when there is more wind resource available than forecast.

In this paper, we discuss the equilibrium of an electric energy market in the presence of renewable energy sources and demand response. Starting from the framework proposed in \cite{conejo2006}, we use the approach based on linear complementarity problem to delineate the underlying market model. Representing the effect of forecast errors in renewable energy as an uncertain parameter, we evaluate its effect on the market equilibrium. In a similar manner, focusing on RTP as the means by which demand response is realized, and representing the effect of RTP as a parametric curtailment on the demand, the effect of this parameter on the overall market equilibrium is also studied. A comprehensive market framework is introduced in order to evaluate these uncertainty effects. This market framework consists of three main participants, generating companies including renewable energy sources, consumer companies that are capable of responding to market changes, and independent system operators (ISO) who clear the market by maximizing social welfare \cite{conejo2006} and \cite{chao1998}.  The overall market equilibrium is first evaluated in the absence of uncertainties due to either forecast errors in renewable energy sources or due to demand response, and sufficient conditions for the proposed market equilibrium to be identical to the Nash equilibrium is established.  The market response in the presence of uncertainties, and in particular, the market equilibrium is analyzed using perturbation analysis. A parametric characterization of the equilibrium shift is delineated. In both the nominal and perturbed markets, the equilibriums are derived using KKT conditions We next define formally the closeness of two strategic game and relate the equilibrium of close games using the notion of $\alpha-\text{approximation}$ and $\epsilon-\text{equilibrium}$. IEEE 30-bus system is used to evaluate the results of both uncertainties in renewable energy and in demand curtailment.

The remainder of this paper is organized as follows: In Section \ref{pre} we present the necessary preliminaries, Section \ref{model} describes the model of the three market participants. In Section \ref{equi}, the overall market equilibrium under nominal conditions is formulated. A perturbation analysis is introduced in Section \ref{pert} to address the effects due to uncertainty, and the resulting effects are summarized.  In Section \ref{sim} numerical simulation results are reported. Section \ref{conc} includes concluding remarks.

\section{Preliminaries}
\label{pre}
In this section, we provide some preliminaries related to the wholesale energy market structure. These include the fundamental theorems of convex optimization which are presented in Sections  \ref{DD}. Given the close relationship between convex optimization %Arman??
and Game theory, we present in Section \ref{nash} related definitions and theorems related to game theory and Nash equilibrium.
In particular, the link between the convex optimization problem and uniqueness of the Pure Strategy Nash Equilibrium is presented including sufficient conditions for the latter.  These in turn are directly used in establishing the equilibrium of the wholesale market in Section \ref{model}.
We start with a few basic definitions.
\begin{defini}
A set $K \subseteq \mathbb{R}$ is convex if for any two points $x, y \in K$, 
\begin{equation}
\alpha x + (1- \alpha) y \in K, \; \forall x, y \in K \; \text{and} \; \alpha \in [0,1].
\end{equation}
\label{convexset}
\end{defini}
\begin{defini}
Given a convex set $K \subseteq \mathbb{R}$ and a function $f(x): K \rightarrow \mathbb{R}$; $f$ is said to be a convex function on $K$ if, $\forall x,y \in K$ and $\alpha \in (0,1)$,
\begin{equation}
f(\alpha x + (1-\alpha)y) \leq \alpha f(x) + (1-\alpha)f(y),
\end{equation}
Furthermore, a function $f(x)$ is concave over a convex set if and only if the function $ -f(x)$ is a convex function over the set.
\end{defini}
\begin{defini}
\label{def3}
Given a scalar-valued function $f(x): \mathbb{R}^n \rightarrow \mathbb{R}$ we use the notation $\nabla f(x)$ to denote the gradient vector of $f(x)$ at point $x$, i.e.,
\begin{equation}
\nabla f(x) =  \begin{bmatrix} \frac{\partial f(x)}{\partial x_1}, ... , \frac{\partial f(x)}{\partial x_n} \end{bmatrix}^T.
\label{gv}
\end{equation}
\end{defini}
\begin{defini}
Given a scalar-valued function $f(x): \prod_{i=1}^I \mathbb{R}^{m_i} \rightarrow \mathbb{R}$ we use the notation $\nabla_i f(x)$ to denote the gradient vector of $f(x)$ with respect to $x_i$ at point $x$, i.e.,
\begin{equation}
\nabla_i f(x) =  \begin{bmatrix} \frac{\partial f(x)}{\partial x_i^1}, ... , \frac{\partial f(x)}{\partial x_i^{m_i}} \end{bmatrix}^T.
\end{equation}
\end{defini}
\subsection{Convex Optimization}
\label{DD}
Consider a generic optimization problem  
%\footnote{While these preliminaries are also addressed in \cite{kiani2011}, we introduce them here too for the sake of completeness.}:
\begin{equation}
\begin{aligned}
& \text{Maximize} \quad f(x) \\
& \text{s.t.} \;
g_n(x) = 0, \quad \forall n=1, \ldots, N \\
&\sum_{n=1}^N R_{mn}h_n(x) \geq c_m, \quad \forall m=1, \ldots L
\label{moptpro}
\end{aligned}
\end{equation}
where $f(x)$ is called the objective function or cost function, $R$ is a matrix of constants and $c_m$ are constants. 
We assume that $f(x): \mathbb{R}^n \rightarrow \mathbb{R}$ 
is a convex function to be maximized over the variable $x$, the functions $g_n(x)$ as equality constraints are affine, and the functions $h_n(x)$ as inequality constraints are cocave. With these assumptions
the optimization problem \rep{moptpro} is termed a convex
optimization problem.
%In order to derive the dual optimization problem, the Lagrangian function is defined as
%\begin{equation}
%L(x,\lambda,\mu) = f(x) + \sum_{n=1}^N \lambda_n g_n(x) + \sum_{m=1}^L \mu_m( R_{mn}h_n(x) - c_m)
%\label{lagrang}
%\end{equation}
% where $\lambda_n$ and $\mu_m \geq 0$ are (dual) Lagrangian multipliers for the equality and inequality constraints, and $x$ is the primal variable. Denoting
% \begin{equation}
% D(\lambda, \mu) = sup_{x} L(x,\lambda,\mu)
% \end{equation} 
%the dual optimization problem is formulated as
% \begin{equation}
%\begin{split}
%\text{Minimize} \quad D(\lambda, \mu)  \\
%\text{such that} \\
%\mu \geq 0, \quad \forall m=1, \ldots, L.
%\label{doptpro}
%\end{split}
%\end{equation}
%Under the condition that the original problem \rep{moptpro} is strictly feasible, then there is no duality gap (i.e. the
%original \rep{moptpro} and the dual problems \rep{doptpro} have the same optimum). In this case, the dual problem \rep{doptpro} can be solved instead of the original problem \rep{moptpro}.
 In addition, the constraint set for
the optimization problem is convex which allows us to use the
method of Lagrange multipliers and the Karush Kuhn Tucker (KKT)
theorem which we state below \cite{Bertsekas2001, Bertsekas1999}.
\begin{thm}
\label{kkt}
Consider the optimization formulated in \rep{moptpro}, where $f(x)$ is a convex function, $g_n(x)$ are affine functions, and $h_n(x)$ are cocave functions. Let $x^*$ be a feasible point, i.e. a point that satisfies all the constraints. Suppose there exists constants $\lambda_n$ and $\mu_m \geq 0$ such that 
\begin{equation}
\begin{aligned}
& \nabla f(x^*)  + \sum_{n=1}^N \lambda_n \nabla g_n(x^*) + \\
& \sum_{m=1}^L \mu_m (R_{mn} \nabla h_n(x^*) - c_m) = 0 \quad  \forall n = 1 ... N \\
& \mu_m( R_{mn}h_n(x^*) - c_m) = 0  \quad, \forall m=1, \ldots, L 
\label{lagrangcompl}
\end{aligned}
\end{equation}
then $x^*$ is a global maximum. If $f(x)$ is strictly concave then $x^*$ is also the unique global maximum.
\end{thm}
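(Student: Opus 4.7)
The plan is to prove this by the standard Lagrangian sufficiency argument for convex programs, adapted to the maximization formulation \rep{moptpro}. Note that, although the statement literally says ``$f(x)$ is a convex function,'' consistency with the closing sentence on strict concavity (and with the fact that we are maximizing) forces the intended hypothesis to be that $f$ is concave; under that reading, the KKT conditions \rep{lagrangcompl} are sufficient for global optimality.

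First I would form the Lagrangian
\begin{equation}
L(x,\lambda,\mu) \;=\; f(x) + \sum_{n=1}^N \lambda_n\,g_n(x) + \sum_{m=1}^L \mu_m\!\left(\sum_{n=1}^N R_{mn} h_n(x) - c_m\right),
\end{equation}
and observe that, with $\mu_m\ge 0$, $f$ concave, each $g_n$ affine (hence both concave and convex), and each $h_n$ concave, the map $x\mapsto L(x,\lambda,\mu)$ is a nonnegative combination of concave functions plus affine terms, and is therefore concave in $x$. The stationarity line in \rep{lagrangcompl} asserts $\nabla_x L(x^*,\lambda,\mu)=0$, and for a concave differentiable function a zero gradient at $x^*$ implies $x^*$ is a global maximizer of $L(\cdot,\lambda,\mu)$ over $\mathbb{R}^n$. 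Consequently $L(x^*,\lambda,\mu)\ge L(x,\lambda,\mu)$ for every $x\in\mathbb{R}^n$, and in particular for every feasible $x$.

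Next I would convert this Lagrangian inequality into one on $f$. For any feasible point $x$, the equality constraints give $g_n(x)=0=g_n(x^*)$, so those terms drop out. The complementary slackness line in \rep{lagrangcompl} gives $\mu_m\bigl(\sum_n R_{mn}h_n(x^*)-c_m\bigr)=0$ for each $m$, while feasibility of $x$ together with $\mu_m\ge 0$ yields $\mu_m\bigl(\sum_n R_{mn}h_n(x)-c_m\bigr)\ge 0$. Substituting these into $L(x^*,\lambda,\mu)\ge L(x,\lambda,\mu)$ collapses the inequality to $f(x^*)\ge f(x)$, establishing that $x^*$ is a global maximum.

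For the uniqueness claim under strict concavity of $f$, the Lagrangian becomes strictly concave in $x$ (strict concavity of $f$ dominates the weak concavity of the remaining terms), so its global maximizer $x^*$ is unique; any other feasible maximizer $\tilde x\ne x^*$ would satisfy $f(\tilde x)=f(x^*)$ but also $L(\tilde x,\lambda,\mu)<L(x^*,\lambda,\mu)$, contradicting the chain of inequalities above. The main thing to be careful about is the bookkeeping of signs in the inequality constraints written as $\sum_n R_{mn}h_n(x)\ge c_m$: the Lagrangian term must be $+\mu_m(\sum_n R_{mn}h_n(x)-c_m)$ with $\mu_m\ge 0$ so that feasibility yields the correct one-sided bound after complementary slackness, and this is the only nontrivial sign convention in the argument.
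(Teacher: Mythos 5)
Your proof is correct. The paper itself does not prove this theorem --- its ``proof'' is the single line ``see \cite{Bertsekas1999}'' --- so there is no in-paper argument to compare against; what you have written is precisely the standard Lagrangian sufficiency argument for convex programs that the cited reference supplies. Your decision to read the hypothesis ``$f$ is convex'' as ``$f$ is concave'' is the right repair of the statement (the problem is a maximization, and the final sentence invokes strict concavity), and your bookkeeping of the sign conventions, the collapse of the equality-constraint terms, the one-sided bound from complementary slackness plus $\mu_m\ge 0$, and the strict-concavity uniqueness argument are all sound. The only caveat, which you inherit from the paper's formulation rather than introduce yourself, is that your step ``$L(\cdot,\lambda,\mu)$ is a nonnegative combination of concave functions plus affine terms'' tacitly requires $x\mapsto\sum_{n}R_{mn}h_n(x)$ to be concave, which holds only if the coefficients $R_{mn}$ multiplying genuinely nonlinear $h_n$ are nonnegative; the paper implicitly assumes this when it asserts that the constraint set is convex, and your argument is valid under that same implicit assumption.
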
 
\begin{proof}
see \cite{Bertsekas1999}.
\end{proof}
\subsection{Game Theory and NASH Equilibrium}
\label{nash}
An alternative game-theoretic approach can be used to describe the market equilibrium \cite{Torre2003, Hobbs2000}. To describe a game, there are four things
to consider: 1) the players, 2) the rules of the game, 3) the outcomes
and 4) the payoffs and the preferences (utility functions)
of the players. 
A player plays a game through actions. An action is a choice or
election that a player takes, according to his (or her) own strategy.
Since a game sets a framework of strategic interdependence, a participant
should be able to have enough information about its own
and other players' past actions. This is called the information set.
A strategy is a rule that tells the player which action(s) it
should take, according to its own information set at any particular
stage of a game. Finally, a payoff function expresses the utility that
a player obtains given a strategy profile for all players.
More formally a strategic form game is defined as follows.
\begin{defini}
A strategic forms game is a triplet $\langle \mathcal{I}, (S_i)_{i \in \mathcal{I}} , (u_i)_{i \in \mathcal{I}} \rangle$ such that $\mathcal{I}$ is a finite set of players , i.e. $\mathcal{I} = \{ 1,...,I \}$, $S_i$ is the set of available actions for player $i$, $s_i \in S_i$ is an action for player $i$, and finally $u_i: S \rightarrow R$ is the payoff function of player $i$ where $S = \prod_i S_i $ is the set of all action profiles.
\end{defini}
In addition, we use the notation $s_{-i} = [s_j]_{j \neq i}$ as a vector of actions for all players except $i$, $S_{-i}= \prod_{j \neq i} Sj$ as the set of all action profiles for all players except $i$, and finally $(s_i, s_{-i})\in S$ denoted a strategy profile of the game. Informally, a set of strategies is a Nash equilibrium if no player can not do better by unilaterally changing his or her strategy.
\begin{defini}
\label{ne}
A Pure Strategy Nash Equilibrium of a strategic game $\langle \mathcal{I}, (S_i)_{i \in \mathcal{I}} , (u_i)_{i \in \mathcal{I}} \rangle$ is a strategy profile $s^* \in S$ such that for all $i \in  \mathcal{I}$
\begin{equation} 
u_i(s_i,s^*_{-i}) \leq u_i(s^*_i,s^*_{-i})\; \forall s_{i} \in S_{i}.
\label{NE}
\end{equation}
\end{defini}
\begin{defini}
Let us assume that for player $i \in \mathcal{I}$, the strategy set $S_i$ is given by
\begin{equation}
S_i = \{x_i \in \mathbb{R}^m | h_i(x_i) \geq 0 \}
\label{strategy}
\end{equation}
where $h_i: \mathbb{R}^m_i \rightarrow \mathbb{R}$ is a concave function and the set of strategy profiles $S= \prod_{i=1}^I S_i \subset \prod_{i=1}^I \mathbb{R}^m_i$ is a convex set. The payoff functions $(u_1, ... , u_I)$ are said to be diagonally strictly concave for $x \in S$ if for every $x^*, \hat{x} \in S$, we have
\begin{equation}
(\hat{x} - x^*)^T \nabla u(x^*) + (x^* - \hat{x} )^T \nabla u(\hat{x}) > 0.
\end{equation}
\label{concave}
\end{defini}

In the current context, the optimization problem in \rep{moptpro} can be
viewed as a game played by all player $i\in \mathcal{I}$, and so that
together they can arrive at a Nash equilibrium that satisfies the
condition \rep{lagrangcompl} in theorem \ref{kkt}.

We now present a theorem that links the equilibrium point of the convex optimization problem in \rep{moptpro} to Nash equilibrium using the above definitions.
\begin{thm}
\label{thm2}
Consider a strategic form game $\langle \mathcal{I}, (S_i)_{i \in \mathcal{I}} , (u_i)_{i \in \mathcal{I}} \rangle$  with $S_i$ as in Definition \ref{concave}. 
%For all $i \in \mathcal{I}$, assume that the strategy sets $S_i$ are given by $S_i = \{x_i \in \mathbb{R}^m | h_i(x_i) \geq 0 \}$, where $h_i$ is a concave function, and there exists some $\tilde{x}_i \in \mathbb{R}^{m_i}$ such that $h_i(\tilde{x}_i) >0$. 
Assume that the symmetric matrix $(U(x) + U^T(x))$ is negative definite for all $x \in S$,
%\begin{equation}
%y^T(U(x) + U^T(x))y < 0 \; \forall y \neq 0
%\end{equation}
where the matrix $U(x)$ is the Jacobian of the gradient vector of $u(x)$, i.e. the $j$th column of $U(x)$ is $\partial u(x) / \partial x_j$, $j = 1,..., I$. Then the game has a unique pure strategy Nash equilibrium identical to the optimal solution $x^* \in \prod_{i=1}^I \mathbb{R}^m_i$ of the following optimization problem
\begin{align}
& \nonumber \max_{y_i \in \mathbb{R}^m_i} u_i (y_i, x^*_{-i})\\
& \text{s.t.} \; h_i(x_i) \geq 0.
\label{thmopt}
\end{align}
\end{thm}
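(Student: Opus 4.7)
The plan is to adapt the classical Rosen-style argument for concave $n$-person games. The proof splits into three steps: (i) translate the global matrix hypothesis into the diagonal strict concavity property of Definition \ref{concave}; (ii) characterize any Nash equilibrium via the KKT conditions of each player's single-agent problem \rep{thmopt}, together with a standard existence argument; and (iii) combine (i) and (ii) to rule out two distinct equilibria, which by Definition \ref{ne} is exactly the claim that the equilibrium coincides with the simultaneous optimum of \rep{thmopt} for every $i$.

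For step (i), I would convert the pointwise inequality $U(x)+U^T(x)\prec 0$ into a global inequality via a line integral. For distinct $x^*,\hat x\in S$, parametrize $\varphi(t)=x^*+t(\hat x-x^*)$; then
\[
(\hat x-x^*)^T\bigl[\nabla u(\hat x)-\nabla u(x^*)\bigr]=\int_{0}^{1}(\hat x-x^*)^T U(\varphi(t))(\hat x-x^*)\,dt,
\]
whose integrand equals $\tfrac12(\hat x-x^*)^T[U+U^T](\varphi(t))(\hat x-x^*)$ and is strictly negative by hypothesis. Rearranging yields the strict inequality in Definition \ref{concave}. For step (ii), concavity of $u_i(\cdot,x^*_{-i})$, inherited from negative definiteness of the $i$th diagonal block of $U+U^T$, together with concavity of $h_i$, makes each player's best-response problem \rep{thmopt} an instance of \rep{moptpro}; Theorem \ref{kkt} then supplies, at any Nash profile $x^*$, multipliers $\mu^*_i\geq 0$ satisfying
\[
\nabla_i u_i(x^*)+\mu^*_i\nabla h_i(x^*_i)=0,\qquad \mu^*_i h_i(x^*_i)=0,\qquad i\in\mathcal{I}.
\]
Existence of such an $x^*$ follows from a standard Kakutani fixed-point argument applied to the best-response correspondence, which is nonempty, convex-valued and upper hemicontinuous under the stated concavity (and the compactness of each $S_i$ implicit in the market application of interest).

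Step (iii) is the main obstacle and where the hypothesis is actually used. Assume, for contradiction, that $x^*\neq\hat x$ are both Nash equilibria with respective multipliers $\mu^*$ and $\hat\mu$. Dotting the stationarity condition at $x^*$ with $\hat x_i-x^*_i$, dotting the analogous condition at $\hat x$ with $x^*_i-\hat x_i$, and summing over $i\in\mathcal{I}$ yields
\[
(\hat x-x^*)^T\nabla u(x^*)+(x^*-\hat x)^T\nabla u(\hat x)=-\sum_{i\in\mathcal{I}}\Bigl[\mu^*_i(\hat x_i-x^*_i)^T\nabla h_i(x^*_i)+\hat\mu_i(x^*_i-\hat x_i)^T\nabla h_i(\hat x_i)\Bigr].
\]
Concavity of $h_i$ gives $(\hat x_i-x^*_i)^T\nabla h_i(x^*_i)\geq h_i(\hat x_i)-h_i(x^*_i)$ and the symmetric inequality at $\hat x$; combined with complementary slackness and feasibility $h_i(\cdot)\geq 0$, this forces the right-hand side to be nonpositive, contradicting the strict positivity from step (i). Hence $x^*=\hat x$, the Nash equilibrium is unique, and by construction it solves \rep{thmopt} for every $i$, completing the proof.
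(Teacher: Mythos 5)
Your proof is correct and follows essentially the same route as the paper: the paper's proof simply cites Rosen's Theorem 6 (negative definiteness of $U+U^T$ implies diagonal strict concavity) and Rosen's Theorem 2 (diagonal strict concavity implies a unique equilibrium coinciding with the solution of the coupled best-response problems), and your steps (i) and (iii) are precisely the proofs of those two cited results, with step (ii) supplying the KKT/Kakutani existence argument of Rosen's Theorem 1. The only points the paper glosses over and you rightly flag are the compactness of $S_i$ needed for the fixed-point existence step and the constraint qualification needed for the KKT multipliers to exist.
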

\begin{proof}
From Theorem 6 in \cite{rosen65} we have that if $U(x)+U^T(x)$ is negative definite for all $x\in S$, then the payoff functions $(u_1,...,u_I)$ are diagonally strictly concave.
This in turn, using Theorem 2 in \cite{rosen65}, implies that there is a unique pure strategy Nash equilibrium that is identical to the optimal solution $x^* \in \prod_{i=1}^I \mathbb{R}^m_i$ of the convex optimization problem in \rep{thmopt}.
\end{proof}

\section{Modeling of Market Agent Behavior}
\label{model}
 In this section, we derive the model of the overall electricity market. The main components of this market include
 % \begin{enumerate}
(i) Generating Companies (GenCo)
(ii) Consumer Companies (ConCo)
(iii) Independent System Operator (ISO)  \cite{conejo2006}.
%\end{enumerate}
We focus our attention in this paper on the wholesale market, and use a deterministic framework to represent all components.
The underlying problem that is to be addressed is the determination of the power production of the generating 
units and the power demanded by the consumers such that power is balanced at all network nodes and social welfare 
is maximized while meeting capacity limits. This is a fairly complex optimization problem since each of the components (i)-(iii) are subjected to constraints that may conflict with each other. 
%%% I ve added %%%%%%
For instance, consumersÕ are interested in maximizing their benefit, and  are therefore interested in cheap electricity. Availability of such cheap generation is limited due to capacity constraints on transmission lines.  In this case, the ISO may need to force more expensive units to keep things in equilibrium.
The solution of the resulting optimization problem not only determines the optimal dispatch but also the 
optimal Local Marginal Price which is determined as the corresponding shadow price.
 In what follows, we model each of the components (i)-(iii) together with their constraints and the optimization goals.
\subsection{Generating Company}
It is assumed that the generating company consists of $N_G$ generating units, and that the production of each generating unit $i$, $i=1,\ldots, N_G$ is divided into $b$
power blocks, where $b=1,\ldots,N_{G_i}$, and is denoted as  $P_{Gib}$. The associated linear operating cost
is denoted as $\lambda_{G_{ib}}^C$.
The goal of the company is to maximize its overall profit, and is stated as 
\begin{equation}
maximize \;  \sum_{i \in G_f}\sum_{b=1}^{N_{Gi}} \big( \rho_{n(i)}-\lambda_{G_{ib}}^C \big) P_{Gib}
\label{gencooverall}
\end{equation}
where 
$\rho_{n(i)}$ denotes the LMP of unit $i$ at node $n$ in the network. The power production is subject to the following constraints:
\begin{subequations}
\begin{align}
%\begin{equation}
& \sum_{b=1}^{N_{Gi}}P_{Gib} \leq P_{Gi}^{max}:  \alpha_i;  \forall i\in G_f \label{maxgen} \\
& P_{Gib} \leq P_{Gib}^{max}:  \phi_{ib};  \forall i\in G_f;b=1, \ldots,N_{Gi} \label{maxgeneach} \\
& P_{Gib} \geq 0 ;  \forall i\in G_f;b=1, \ldots,N_{Gi} \label{posgeneach}
\end{align}
\end{subequations}
where $\alpha_i$ and $\phi_{ib}$ are the corresponding shadow prices.
The decision variables of this problem are the amounts of power $P_{Gib}$ to be
generated by each unit $i$ in each block $b$. While $\rho_{n(i)}$ are variables in the overall OPF problem, 
they can be viewed as fixed values with respect to the optimization of \rep{gencooverall}.
Necessary and sufficient conditions for the optimization of \rep{gencooverall} subject to \rep{maxgen} - \rep{posgeneach} are enumerated below, using dual variables $\alpha_i$ and $\phi_{ib}$, which correspond to the KKT conditions \cite{conejo2006}
\begin{subequations}
\begin{align}
&P_{Gib} \big( -\rho_{n(i)} + \lambda_{G_{ib}}^C  + \alpha_i + \phi_{ib} \big) = 0 \; \forall i\in G_f; \label{Gopt1eq} \\
&\nonumber  b=1, \ldots,N_{Gi}  \\
& \alpha_i \big( -\sum_{b=1}^{N_{Gi}}P_{Gib} + P_{Gi}^{max}\big) = 0 ; \forall i\in G_f \label{Gopt2eq} \\
& \phi_{ib} \big(P_{Gib}^{max} - P_{Gib}\big) = 0 ; \forall i\in G_f;b=1, \ldots ,N_{Gi} \label{Gopt3eq}
\end{align}
\end{subequations}
\begin{defini}
\label{RGECO}
GenCo $i$ is said to behave rationally if the associated linear operating cost denoted as $\lambda_{G_{ib}}^C$ is nondecreasing with respect to $P_{G_{ib}}$. That is
\begin{equation}
d(\lambda_{G_{ib}}^C)/d (P_{G_{ib}}) > 0, \; \forall i\in G_f, \; b=1, \ldots,N_{Gi}.
\label{cond1}
\end{equation}
\end{defini}
%%%%%%%%% Consumer%%%%%%%%%%%
\subsection{Consumer Modeling}
A consumer company (ConCo) is assumed to consist of $N_D$ units, and the demand of each unit $j$, $j=1,\ldots, N_D$, is divided into several blocks $N_{D_j}$, with a block-index $k=1,\ldots,N_{D_j}$. The associated linear utility function is denoted as $\lambda_{D_{jk}}^U$ which represents the value of using electricity for the consumer. The goal of a ConCo is to maximize the total profit while consuming electricity. 
This profit, for a unit $j$ in block $k$ connected to node $n$, is determined as the difference between the utility $\lambda_{D_{jk}}^U$ and the corresponding LMP $\rho_n(j)$ \cite{conejo2006}. Assuming that the corresponding power consumed is denoted as $P_{Djk}$, the maximization problem can be posed as 
\begin{equation}
Maximize \; \sum_{j \in D_q}\sum_{k=1}^{N_{Dj}} \big( \lambda_{D_{jk}}^U-\rho_{n(j)}\big)P_{Djk}
\label{objd}
\end{equation}
As before, this is subjected to the constraints
\begin{subequations}
\begin{align}
& P_{Djk} \leq P_{Djk}^{max}: \sigma_{j} ; \forall j\in D_q;k=1, \ldots,N_{Dj} \label{maxd} \\
& \sum_{k=1}^{N_{Dj}}P_{Djk} \geq P_{Djk}^{min}:  \psi_{jk} ; \forall j\in D_q;k=1, \ldots,N_{Dj} \label{mind} \\
& P_{Djk} \geq 0 ; \forall j\in D_q;k=1, \ldots,N_{Dj} \label{nonzerod} 
\end{align}
\end{subequations}
where $\sigma_j$ and $\psi_{jk}$ are the corresponding dual variables for \rep{maxd} and \rep{mind}. While the constraint \rep{nonzerod} is almost always satisfied in current-day markets, this may not be the case in micro-grid topologies.
The decision variables of this problem are  $P_{Djk}$, the amounts of power to be
consumed by each demand $j$ in each block $k$.
While $\rho_{n(j)}$ are  variables in the overall OPF problem, 
they can be viewed as fixed values, as before, with respect to the optimization of \rep{objd}.
Necessary and sufficient conditions for the optimization of \rep{objd} subject to \rep{maxd} and \rep{mind} are enumerated below, using dual variables $\sigma_j$ and $\psi_{jk}$, which correspond to the KKT conditions \cite{conejo2006}
\begin{subequations}
\begin{align}
& P_{Djk} \big( \rho_{n(j)} - \lambda_{D_{jk}}^u  - \sigma_j + \psi_{jk} \big) = 0 \label{Dopt4eq} \\
& \nonumber \forall  j\in D_q;k=1, \ldots,N_{Dj}\\
& \psi_j  \big( \sum_{k=1}^{N_{Dj}}P_{Djk} - P_{Djk}^{min} \big) = 0 ; \forall j\in D_q;k=1, \ldots,N_{Dj}  \label{Dopt5eq}\\
&\sigma_{jk} \big( -P_{Djk} + P_{Djk}^{max} \big) = 0 ; \forall j\in D_q;k=1, \ldots,N_{Dj}  \label{Dopt6eq}
\end{align}
\end{subequations}
\begin{defini}
\label{RCONCO}
ConCo $j$ is said to behave rationally if the associated linear operating cost denoted as $\lambda_{D_{jk}}^u$ is non-increasing with respect to $P_{D_{jk}}$. That is
\begin{equation}
d(\lambda_{D_{jk}}^u)/d (P_{D_{jk}}) <0, \; \forall j\in D_q,  k=1, \ldots,N_{Dj}.
\label{cond2}
\end{equation}
\end{defini}
%%%%%%%Unceratinty%%%%%%%%%%%%%%%%%
\subsection{Effect of Uncertainties and Demand Response}
The most dominant impact of the introduction of distributed energy resources is uncertainties, which can directly alter the power generated. This in turn can affect the overall market equilibrium as well as the corresponding LMP. 
We take a first step in this direction by introducing uncertainties in the decision variables introduced in section A. We first separate the family of $P_{Gib}$ into $P^C_{Gib}$, $ i=1,\ldots,n_C$, and $P^R_{Glb}$, $l=1,\ldots,n_R$, where $n_C$ denotes the more conventional dispatchable generating units, and $n_R$ denotes distributed energy resources such as those based on wind and solar energy, which are non-dispatchable. 
At each node, both conventional generators and wind generators may be committed for production. We assume that the wind GenCo are competitive and that they submit their bids to the market as other conventional GenCo, and not modeled as a negative demand. It is also assumed that the available wind energy is overestimated which implies that if the assumed power is not available, power can be purchased from an alternate source or that loads can be shed. Underestimation implies that surplus power is either sold to adjacent utilities, or consumed through fast redispatch and automatic gain control, or reduced through reduction of conventional generation. This case is however not discussed in this paper. 

Using the above discussion, the objective function defined in \rep{gencooverall} is modified as
\begin{equation}
maximize \;  \sum_{l \in G_w}\sum_{b=1}^{N_{Gl}} \big( \rho_{n(l)}-\lambda_{G^w_{lb}}^C - C^r_{w_{lb}}( \Delta_{w_{lb}} )   \big) P^w_{Glb}
\label{windoverall}
\end{equation}
% \bar{P}^w_{Glb} - P^w_{Glb}
where
$P^w_{Glb}$ is the wind power from the $l^{th}$ wind generator, and assumed to lie in the interval $\left[ P^w_{Glb_{min}}, P^w_{Glb_{max}}\right]$. $ \Delta_{w_{lb}}$ is due to wind uncertainty, given by
% $\Delta_k=r_{Gk}\Delta_{Gk}$ with
\begin{equation}
 \Delta_{w_{lb}} = \bar{P}^w_{Glb} \Delta_{Glb} 
%\nomenclature{$\Delta_{Gib}$}{Uncertainty in block $b$ of wind generating unit $i$ }
\label{uncergenco}
\end{equation}
where 
$\bar{P}^w_{Glb}$ is the mean wind power,
%\begin{equation}
% r_{Gk}= \frac{P^w_{Gk_{max}}- P^w_{Gk_{min}}}{P^w_{Gk_{max}}+ P^w_{Gk_{min}}}
%\end{equation}
and $0<\Delta_{Glb}<1.$ 
We note that if there is underestimation, then $\Delta_{Glb}$ needs to be negative.

$\lambda_{G^w_{lb}}^C$ is the marginal cost function for the $l^{th}$ wind generator which is very close to zero.
Finally, $C^r_{w_{lb}}(\Delta_{w_{lb}})$ is a cost incurred by committing specific generators as reserves \cite{conejowind2009}, due to the wind uncertainty $\Delta_{w_{lb}}$, and is modeled as a quadratic function
\begin{equation}
C^r_{w_{lb}}(\Delta_{w_{lb}})  = b_{w_{lb}} \Delta_{w_{lb}} + \frac{c_{w_{lb}}}{2}\Delta_{w_{lb}}^2.
\label{windcost}
\end{equation}
We define another quantity $x^w$ which represents the percentage of wind penetration as
\begin{equation}
\begin{split}
x^w = \frac{\sum_{l \in G_w}\sum_{b=1}^{N_{Gl}} P^w_{Glb} }{  \sum_{j \in D_q}\sum_{k=1}^{N_{Dj}}P_{Djk}^{max}}
\end{split}
\label{pen}
\end{equation}
where $P_{Djk}^{max}$ is the maximum power demanded by consumer $j$ in block $k$ defined in \rep{maxd}. We note that  the impact of wind power on the market equilibrium is much smaller if $x^w$ is small, i.e., if wind penetration is low, than if $x^w$ is large. 

%

%We now introduce uncertainties $\Delta_{Gib}$ into the non-dispatchable GenCo, as 
%\begin{equation}
%\bar{P}^R_{Gib} = P^R_{Gib} \big( 1- \Delta_{Gib}\big), \qquad 0 < \Delta_{Gib} < 1
%\nomenclature{$\Delta_{Gib}$}{Uncertainty in block $b$ of wind generating unit $i$ }
%\label{uncergenco}
%\end{equation}
%The uncertainty $\Delta_{Gib}$ may be due error in forecast, lack of controllability, or nonlinear effects.

Yet another component that we introduce into the picture is Demand Response \cite{Yang2009, Bompard2007}. As for GenCo, categorizing all ConCo units into dispatchable and non-dispatchable ones, we consider a dispatchable load $P_{Djk}$ and assume that it is affected by Demand Response and is sensitive to price changes. This effect is modeled using a control parameter $\kappa_{Djk}$, and denotes the response of the consumers to a change in the Real Time Price (RTP) which occurs due to Demand Response.
\begin{equation}
\bar{P}_{Djk} = P_{Djk} \big( 1-\kappa_{Djk} \big) \qquad 0 < \kappa_{Djk} < 1
\nomenclature{$ \kappa_{Djk} $}{Curtailment Factor of block $k$ by demand $j$}
\label{uncerconco}
\end{equation}
$\bar{P}_{Djk}$ denotes the consumption incorporated with demand responsiveness into RTP. A positive $\kappa_{Djk}$, denotes a decrease in the ConCo consumption while a negative $\kappa_{Djk}$, denotes an increase. 
In this paper, we restrict our attention to positive $\kappa_{Djk}$ since our focus is on cases where there is a shortfall in the non-dispatchable GenCo, i.e. $\Delta_{Glb}>0$.
 It is assumed that $\kappa_{Djk}$ is suitably calibrated to represent the effect of RTP on the consumer behavior. Noting that this control parameter is proportional to the elasticity factor $\epsilon$ defined as \cite{Bompard2007}
\begin{equation}
\epsilon = \dfrac{\frac{\Delta P_{Djk}}{P_{Djk}}}{\frac{\Delta \rho_{n(j)}}{\rho_{n(j)}}}
\nomenclature{$ \Delta P_{Djk} $}{Change of consumption by demand $j$}
\nomenclature{$ \Delta \rho_{n(j)} $}{Change of LMP for demand $j$}
\end{equation}
and is a measure of the consumption response to RTP changes, we denote this control parameter, $\kappa_{Djk}$ as Curtailment Factor.

In contrast to $\Delta_{Gkb}$, which may be unknown, curtailment factor $\kappa_{Djk}$, is assumed to be controllable. Nevertheless, the equilibrium as well as the LMPs are altered due to the presence of the perturbation parameters $\Delta_{Gkb}$, and $\kappa_{Djk}$. In Section III, a similar optimization procedure as above and equivalent KKT conditions to \rep{Gopt1eq}-\rep{Gopt3eq} and \rep{Dopt4eq}-\rep{Dopt6eq} are derived, and the dependence of the equilibrium and LMPs on the perturbation parameters are discussed in detail.

It should be noted that the effect of RTP is modeled in this paper in the form of  $\kappa_{Djk}$.The inherent assumption here is that the nondispatchable ConCo observes the LMP, which is the solution of \rep{Dopt4eq}-\rep{Dopt6eq}, and suitably adjusts its demand. This adjustment is represented through the curtailment factor  $\kappa_{Djk}$ and therefore the elasticity factor $\epsilon$.  Therefore, it may be argued that the RTP is assumed to be the same as LMP in this paper, and that the effect of RTP is modeled essentially as a Demand Response parameter, denoted by $\kappa_{Djk}$. Further improvements of such a modeling of RTP and demand response are the focus of ongoing research.
%%% ISO %%%%%%%%%%%%%%%%%
\subsection{Independent System Operator ISO modeling}
In addition to GenCo and ConCo, a competitive electricity market also includes a coordinator which is independent of these profit-maximization entities \cite{shahidehpour2002, shahidehpour2001, Conejo2002}. Such an independent entity is necessary in order to maintain order in the overall market. Denoted as Independent System Operator (ISO), this entity is responsible for guaranteeing a non-discriminating access for all participants to the market, and enforces various guidelines such as maintaining transmission capacity limits.
While in practice, distinctions are made between an ISO and what is termed a Market Operator (MO) where the latter is responsible for financial management and the latter for overall physical supervision, in this paper we combine the two roles into the same rubric of an ISO. 
%A competitive electricity market includes profit-maximization entities such
%as generating companies, consumers and marketers. A coordinator, independent of the market participants, is required for the appropriate working
%of the market \cite{shahidehpour2002, shahidehpour2001, Conejo2002}. Although the responsibilities of the system
%coordinator differ between models; in general, the ISO is set up to guarantee
%a non-discriminating access of market participants to the market.
%Depending on the market, coordination can be developed by different
%entities that basically are a market operator or / and an independent system
%operator. The responsibilities of each one of these entities are the following:
%\begin{enumerate}
%% \end{enumerate}
%\item Market operator: With the purpose of ensuring the proper operation
%of the market, this entity assumes the functions required to perform
%the financial management and, in particular, the management of the
%electric power purchase and sale bids.
%\item Independent system operator: The entity that is responsible for
%the physical control of the system to maintain its security and reliability.
%\end{enumerate}
%In some market structures, the MO and the ISO are separate entities.
%However, in other structures, the MO function is within the same organization and under the control of the ISO, therefore, the ISO is also responsible
%for the financial management.
The responsibilities of the ISO are
to enforce transmission capacity limits, to maintain independence from the
market participants, to avoid discrimination against the market participants,
and to promote the efficiency of the market.

The electricity market that we consider in this paper is wholesale and is assumed to function as follows: First, each generating
company submits the bidding stacks of each of its units to the pool. Similarly, each
consumer submits the bidding stacks of each of its demands to the pool. Then,
the ISO clears the market using an appropriate market-clearing procedure
resulting in prices and production and consumption schedules.
The market-clearing procedure may introduce network constraints, which
model losses \cite{shahidehpour2002} and line capacity limits \cite{illic1998}. In
this paper we include network constraints in the underlying market model and the resulting prices are therefore
the Locational Marginal Prices \cite{Schweppe1988}. This implies that a generating unit injecting
power at a given node is paid the locational marginal price corresponding to
that node; and conversely, a demand receiving power from a given node pays
the locational marginal price corresponding to that node.
The most dominant network constraints are due to line capacity limits and network losses. Technical constraints cause the power flow through any line to be limited. The power flow is said to be congested when it approaches its maximum limit. This constraint is explicitly included in our model below. The second constraint is due to losses, most of which are due to the heat loss in the power lines. For ease of exposition, such ohmic losses are not modeled in this paper. 
The standard market-clearing procedure is based on Social Welfare \cite{shahidehpour2002}. Denoting this as $S_W$, it is defined as 
\begin{equation}
S_W=\sum_{j \in D_q}\sum_{k=1}^{N_{Dj}}  \lambda_{Djk}^B P_{Djk}- \sum_{i \in G_f}\sum_{b=1}^{N_{Gi}}\lambda_{G_{ib}}^B P_{Gib}
\label{sw}
\end{equation}
where the first and second term denote the revenue due to surpluses stemming from bids from GenCo and ConCo, respectively.
The market-clearing procedure is then given by
\begin{equation}
Maximize \; S_W 
\label{ISO}
\end{equation}
subject to
\begin{subequations}
\begin{align}
& \nonumber  \sum_{i \in \theta}\sum_{b=1}^{N_{Gi}}P_{Gib} -  \sum_{j \in \vartheta}\sum_{k=1}^{N_{Dj}} P_{Djk} -  \\
&\sum_{m \in \Omega} B_{nm}\big[ \delta_n-\delta_m \big]=0  \label{balance}  ;\rho_{n} \forall n \in N \\
& B_{nm}\big[ \delta_n-\delta_m\big]  \leq P_{nm}^{max} ;  \gamma_{nm} \forall n \in N; \forall m \in \Omega \label{capacity}
\end{align}
\end{subequations}
The constraints \rep{balance} and \rep{capacity} are due to power balance and capacity limits, respectively.
It can be seen that the associated Lagrange multipliers, $ \rho_n $ and $ \gamma_{nm}$, are indicated in each constraint.

The underlying optimization problem of the ISO can be therefore defined as the optimization of \rep{ISO} subject to constraints \rep{balance} and \rep{capacity}. 
This problem can be restated as the solutions of the generation power blocks levels $P_{Gib}$, the
demand power blocks levels $P_{Djk}$, the voltage angle $\delta_n$, and dual variables $ \rho_n $ and $ \gamma_{nm}$ such
that
\begin{subequations}
\begin{align}
& P_{Gib}\big( \lambda_{Gib}^B -  \rho_{n(i)}  \big) = 0 \;  \forall i \in G_f;b=1, \ldots,N_{Gi} \label{opt7eq} \\
& P_{Djk}\big(\rho_{n(j)} - \lambda_{Djk}^B \big) = 0 \;  \forall j \in D_q;k=1, \ldots,N_{Dj} \label{opt8eq} \\
& \nonumber \delta_n \big( \sum_{m \in \Omega} B_{nm}\big[ \rho_n-\rho_m \big] + \\
& \sum_{m \in \Omega} B_{nm} \big[ \gamma_{nm}-\gamma_{mn}\big] \big) = 0 \;  \forall n \in N \label{opt9eq} \\
& \nonumber \rho_n \big( -\sum_{i \in \theta}\sum_{b=1}^{N_{Gi}}P_{Gib} + \sum_{j \in \vartheta}\sum_{k=1}^{N_{Dj}} P_{Djk} + \\
& \sum_{m \in \Omega} B_{nm} \big[ \delta_n-\delta_m \big] \big) =0 \; \forall n \in N; \rho_n > 0 \label{opt10eq1} \\
& \gamma_{nm} \big( P_{nm}^{max} - B_{nm} \big[ \delta_n-\delta_m \big]  \big)= 0\; \forall n \in N; \forall m \in\Omega \label{opt11eq} 
%& \delta_n \geq 0 \; \forall n \in N \label{opt12eq}
\end{align}
\end{subequations}
The decision variables of this problem are the amounts of power to be
generated by each generating unit $i$ in each block $b$, i.e.,  $P_{Gib}$; the amounts of
power to be consumed by each demand $j$ in each block $k$, i.e., $P_{Djk}$ and
the locational marginal prices, $\rho_{n(j)}$.
It should be noted that \rep{opt10eq1} is arrived at using the balance equation, which implies that the term in the corresponding parenthesis is zero, and by multiplying this term by $\rho_n$, which is positive.

In summary, the market model that we consider in this paper include optimization goals of GenCo discussed in section II-A. and formulated in eqs. \rep{gencooverall}-\rep{posgeneach}, optimization goals of ConCo, \rep{objd}-\rep{nonzerod}, and formulated in II-B, and finally optimization of the ISO discussed in section II-D, and formulated in eqs \rep{sw}-\rep{capacity}.  The combined problem statement is the determination of an equilibrium point which collectively optimizes all the above three market contributors.
The corresponding equilibrium point is therefore defined as that which satisfies the following criteria:
 \begin{enumerate}
\item maximum profit for every individual generating company
\item maximum utility for every individual consumer
\item maximum net social welfare for the ISO.
 \end{enumerate}
\section{Market Equilibrium}
\label{equi}
We now discuss the overall market equilibrium. In the GenCo and ConCo optimization problems, given in  \rep{gencooverall}-\rep{posgeneach} and \rep{objd}-\rep{nonzerod}, respectively, the Locational Marginal Prices appear as inputs. On the other hand, these LMPs appear in the optimization problem of the ISO in \rep{sw}-\rep{capacity} as dual prices corresponding to the balance equations, causing a tight link between the three families of optimization problems, thereby making the overall problem nontrivial. 

We note that  each of these three sets of optimization
problems are linear programming problems. Thus, the Karush-Kuhn-Tucker
(KKT) optimality conditions are both necessary and sufficient for describing the optimal solutions. The optimality conditions of the three sets of problems result in a Linear Complementarity Problem (LCP) which correspond to the market equilibrium for all $i\in G_f$, $b=1, \ldots,N_{Gi}$, $j\in D_q$, $ k=1, \ldots,N_{Dj}$, $ n \in N$, and $m \in \Omega$ as follows:
\begin{subequations}
\begin{align}
& P_{Gib}\big(-\rho_{n(i)} + \lambda_{G_{ib}}^C  + \alpha_i + \phi_{ib} \big) = 0  \label{opt1eq} \\
& \alpha_i \big( -\sum_{b=1}^{N_{Gi}}P_{Gib} + P_{Gi}^{max}\big) = 0 \label{opt2eq} \\
& \phi_{ib} \big( P_{Gib}^{max} - P_{Gib} \big) = 0 \label{opt3eq} \\
& P_{Djk} \big( \rho_{n(j)} - \lambda_{D_{jk}}^u  - \sigma_j + \psi_{jk} \big) = 0  \label{opt4eq} \\
& \psi_j  \big( \sum_{k=1}^{N_{Dj}}P_{Djk} - P_{Djk}^{min} \big) = 0  \label{opt5eq}\\
& \sigma_{jk} \big( -P_{Djk} + P_{Djk}^{max} \big) = 0   \label{opt6eq} \\
& P_{Gib}\left( \lambda_{Gib}^B -  \rho_{n(i)}  \right) = 0  \label{opt7eq} \\
& P_{Djk}\left(\rho_{n(j)}  - \lambda_{Djk}^B\right) = 0  \label{opt8eq} \\
&\nonumber  \delta_n \big( \sum_{m \in \Omega} B_{nm}\big[ \rho_n-\rho_m \big] + \\
& \sum_{m \in \Omega} B_{nm}\big[ \gamma_{nm}-\gamma_{mn}\big] \big) = 0  \label{opt9eq} 
\end{align}
\begin{align}
&\nonumber \rho_n \big(-\sum_{i \in \theta}\sum_{b=1}^{N_{Gi}}P_{Gib} + \sum_{j \in \vartheta}\sum_{k=1}^{N_{Dj}} P_{Djk} +\\
& \sum_{m \in \Omega} B_{nm} \big[ \delta_n-\delta_m \big] \big) =0 \label{opt10eq} 
\end{align}
\begin{align}
& \gamma_{nm} \big( P_{nm}^{max} - B_{nm} \big[ \delta_n-\delta_m \big]  \big) = 0 \label{opt11eq} 
%& \delta_n \geq 0   \label{opt12eq}
\end{align}
\end{subequations}
The above problem can be compactly stated as the solution of the following LCP: Find a vector $x^* \in R^n$ that solves the following constraints:
\begin{align}
%\begin{split}
& \nonumber x^T \big( Mx + q \big) = 0 \\
& \nonumber x \geq 0 \\
& Mx + q \geq 0
\label{mlcp}
%\end{split}
\end{align}
where $x \in R^{n \times 1}$ is the variables vector and square matrix $M \in R^{n \times n}$ and $q \in R^{n \times 1}$ contains operating costs and bids of generators, utilities and bids of consumers, maximum and minimum limit of generators and consumers and also maximum thermal limit of transmission lines (See Nomenclature for all definitions). The corresponding solution $x^*$ of the underlying LCP problem, which determines the market equilibrium and is dependent on $M$ and $q$, is denoted as 
\begin{equation}
x^* \triangleq LCP(M,q).
\label{xdef}
\end{equation}
\begin{defini}
A matrix $M \in R^{n \times n}$ is called a P-matrix if its all principal minors are positive.
\end{defini}

We now state and prove the theorem that discusses the LCP solution.
\begin{thm}
In LCP problem defined in \rep{mlcp}, matrix $M$ is  a P-matrix if and only if the $LCP(M,q)$ has a unique solution for any $q \in R^n$. Moreover, if $M$ is a P-matrix then there is a neighborhood 
${\cal M}$ of $M$, such that all matrices in ${\cal M}$ are P-matrix. 
\label{more}
\end{thm}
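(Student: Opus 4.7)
The plan is to handle the two assertions separately: first the bi-conditional characterization of $P$-matrices via uniqueness of the LCP, then the openness of the set of $P$-matrices inside $\mathbb{R}^{n\times n}$.

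For the bi-conditional, I would prove each implication in turn. For the ``only if'' direction (uniqueness given $P$-matrix), the key tool is the classical Samelson--Thrall--Wesler characterization: $M$ is a $P$-matrix if and only if for every nonzero $y\in\mathbb{R}^n$ there exists an index $i$ with $y_i(My)_i>0$. Assuming two distinct solutions $x^{(1)},x^{(2)}$ of $\mathrm{LCP}(M,q)$, I would set $y=x^{(1)}-x^{(2)}$ and $w^{(r)}=Mx^{(r)}+q$ and examine the componentwise sign pattern forced by complementarity $x^{(r)T}w^{(r)}=0$, $x^{(r)},w^{(r)}\ge 0$; a short case analysis on whether $y_i\ge 0$ or $y_i<0$ forces $y_i(My)_i\le 0$ for every $i$, contradicting the $P$-matrix property unless $y=0$, which gives uniqueness. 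Existence for any $q$ can be established via Lemke's pivoting algorithm, which is known to process every LCP with a $P$-matrix without ray termination, or alternatively by a degree-theoretic argument applied to the piecewise-linear map $x\mapsto \min(x,Mx+q)$ whose Jacobian determinants on each orthant equal principal minors of $M$; I would cite the Cottle--Pang--Stone monograph for this step rather than reproducing the technical construction. For the ``if'' direction (uniqueness for every $q$ implies $P$-matrix), I would argue by contrapositive: suppose some principal submatrix $M_{SS}$ has nonpositive determinant. Using sign-reversal arguments, one constructs a $q$ and a corresponding choice of $S$ such that $\mathrm{LCP}(M,q)$ either has no solution or admits two complementary solutions supported on complementary index sets, contradicting the hypothesized uniqueness for all $q$.

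For the ``moreover'' clause, the openness of the set of $P$-matrices is a straightforward continuity argument. Each principal minor $\det(M_{SS})$ for nonempty $S\subseteq\{1,\dots,n\}$ is a polynomial, hence continuous, function of the entries of $M$. Therefore, each set
\begin{equation}
\mathcal{U}_S=\{N\in\mathbb{R}^{n\times n}:\det(N_{SS})>0\}
\end{equation}
is open in $\mathbb{R}^{n\times n}$, and the set of $P$-matrices equals the finite intersection $\bigcap_{\emptyset\neq S\subseteq\{1,\dots,n\}}\mathcal{U}_S$, which is therefore open. Hence any $P$-matrix $M$ admits a neighborhood $\mathcal{M}$ consisting entirely of $P$-matrices.

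I expect the main obstacle to be establishing existence of a solution to $\mathrm{LCP}(M,q)$ when $M$ is a $P$-matrix, since uniqueness is a one-line algebraic consequence of the Samelson--Thrall--Wesler characterization but existence genuinely requires either a constructive algorithm (Lemke's method together with a proof that no secondary ray is generated) or a topological argument (mapping degree of the natural piecewise-linear map). Given the scope of the paper, I would appeal to the textbook result rather than reproduce the full existence proof, and devote the written argument to the uniqueness half, the converse by contradiction, and the short continuity proof for the neighborhood statement.
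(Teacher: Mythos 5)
Your proposal is essentially correct, but there is nothing in the paper to compare it against: the paper's entire ``proof'' of this theorem is the single line ``see \cite{More1997}'', i.e.\ the authors treat it as a known result from the linear complementarity literature and do not reproduce any argument. What you have written is a faithful reconstruction of the standard textbook proof (the Samelson--Thrall--Wesler sign-reversal characterization for uniqueness, Lemke's method or a degree argument for existence, the contrapositive construction for the converse, and continuity of the finitely many principal minors for openness of the $P$-matrix set). Two small remarks. First, the uniqueness step is even shorter than you suggest: with $y=x^{(1)}-x^{(2)}$ one gets directly
\begin{equation}
y_i (My)_i \;=\; -\,x^{(1)}_i w^{(2)}_i \;-\; x^{(2)}_i w^{(1)}_i \;\le\; 0
\end{equation}
for every $i$, since complementarity annihilates the diagonal products $x^{(r)}_i w^{(r)}_i$ and all remaining terms are nonnegative; no case analysis on the sign of $y_i$ is needed. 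Second, your honest flagging of the existence half as the genuinely nontrivial step, and your decision to cite Cottle--Pang--Stone for it, matches the level of rigor the paper itself adopts (it cites rather than proves the whole theorem), so nothing in your plan would fail. The only mismatch with the paper is one of emphasis, not substance: you prove what the authors merely quote.
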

\begin{proof}
see \cite{More1997}.
\end{proof}
An additional result that will prove to be useful for our following discussions concerns the dependence of $x(M,q)$ on the market parameters. This is characterized in the following theorem.
\begin{thm} \cite{Chen2007}
Matrix $A$ is a P-matrix if and only if $\left( I-D+DA \right) $ is nonsingular for any diagonal matrix $D=diag (d)$ with $0 \leq d_i \leq 1$, $i= 1,2, \ldots, n$. Furthermore, if $x(A,q)$ and $x(B,p)$ are the solution of the corresponding $LCP(A,q)$ and $LCP(B,p)$ respectively, we have:
\begin{equation}
\frac{\parallel x(B,P) - x(A,q)\parallel}{\parallel x(B,P) \parallel} \leq \beta(A) \parallel (A-B)x(B,p)+q-p \parallel
\label{lemma1}
\end{equation}
where 
\begin{equation}
\beta(A) = max_{d \in [0,1]} ||(I-D+DA)^{-1}D||.
\label{beta}
\end{equation}
\end{thm}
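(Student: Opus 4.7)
The theorem decomposes into an algebraic equivalence (the P-matrix characterization via nonsingularity of $I-D+DA$) and a perturbation bound for LCP solutions. My plan is to prove these in sequence, using the first to close the second.

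For the equivalence, I would work from the standard characterization that $A$ is a P-matrix iff for every nonzero $x$ there is some index $i$ with $x_i(Ax)_i > 0$. To prove $(\Leftarrow)$, I would suppose toward contradiction that $(I-D+DA)x = 0$ with $x\neq 0$; row $i$ reads $(1-d_i)x_i + d_i(Ax)_i = 0$, and multiplying by $x_i$ yields $(1-d_i)x_i^2 = -d_i\,x_i(Ax)_i$. Since $d_i\in[0,1]$, the left side is nonnegative, so $x_i(Ax)_i\leq 0$ for every $i$, contradicting the P-matrix property. For $(\Rightarrow)$, I would take a witness $x\neq 0$ with $x_i(Ax)_i\leq 0$ for all $i$ and set $d_i = x_i/(x_i-(Ax)_i)$ if $x_i\neq 0$ and $d_i=0$ otherwise; the sign relation between $x_i$ and $(Ax)_i$ places each $d_i$ in $[0,1]$, and by construction $(I-D+DA)x=0$, so $I-D+DA$ is singular for this choice of $D$.

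For the perturbation bound, I would use the min-map reformulation: $y$ solves $LCP(M,q)$ iff $\min(y_i,(My+q)_i)=0$ componentwise. The key tool is the elementary lemma that for any reals $a,b,a',b'$ there exists $d\in[0,1]$ with $\min(a,b)-\min(a',b')=(1-d)(a-a')+d(b-b')$, which I would prove by splitting into the four cases according to which argument attains each min and solving for $d$ in each. Applying this componentwise to the identity $\min(x,Bx+p)-\min(x^\ast,Ax^\ast+q)=0$, where $x:=x(B,p)$, $x^\ast:=x(A,q)$, $u:=x-x^\ast$, and $D:=\mathrm{diag}(d)$, produces $(1-d_i)u_i + d_i((Bx+p)_i-(Ax^\ast+q)_i)=0$. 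Adding and subtracting $Ax$ to split $(Bx+p)-(Ax^\ast+q)=Au+(B-A)x+(p-q)$ and collecting terms gives the compact linear system
\begin{equation*}
(I-D+DA)\,u = D\bigl[(A-B)x + (q-p)\bigr].
\end{equation*}
By the first part of the theorem, $I-D+DA$ is invertible for every diagonal $D$ with entries in $[0,1]$, so I can invert and take norms, bounding $\|(I-D+DA)^{-1}D\|$ uniformly by $\beta(A)$ as defined in \rep{beta}. Dividing by $\|x(B,p)\|$ then yields the claimed estimate \rep{lemma1}.

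The main obstacle I anticipate is the bookkeeping around the min-map convex-combination lemma: the weight $d$ must multiply the second argument so that the matrix that emerges is exactly $I-D+DA$ and not some variant involving $B$, and the decomposition of $(Bx+p)-(Ax^\ast+q)$ must be arranged to isolate a term of the form $A u$ rather than $B u$. Once these conventions are fixed, both parts become essentially linear-algebraic, and the role of the P-matrix hypothesis is simply to guarantee that the coefficient matrix produced by the min-map argument is invertible for every admissible $D$.
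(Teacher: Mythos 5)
The paper offers no proof of this theorem at all: it is stated with a bare citation to Chen and Xiang, without even the ``see [reference]'' proof environment that accompanies the preceding theorem. So there is nothing in the paper to compare against; what you have done is reconstruct the argument of the cited source, and your reconstruction is essentially the standard one and is sound. Both directions of the equivalence are correct (in the direction ``P-matrix implies nonsingular,'' note that when $d_i=0$ the $i$-th row forces $x_i=0$, so $x_i(Ax)_i\le 0$ still holds for that index; your argument covers this implicitly), and the min-map machinery is exactly the right mechanism: the convex-combination lemma for $\min$, the decomposition $(Bx+p)-(Ax^*+q)=Au+(B-A)x+(p-q)$, and the resulting identity $(I-D+DA)u=D\left[(A-B)x+(q-p)\right]$ are precisely what make the constant $\beta(A)$ of \rep{beta} appear.

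The one genuine wrinkle is your last step. The derivation yields the absolute bound
\begin{equation*}
\parallel x(A,q)-x(B,p)\parallel \; \le \; \beta(A)\, \parallel (A-B)x(B,p)+q-p \parallel ,
\end{equation*}
and dividing by $\parallel x(B,p)\parallel$ produces a right-hand side that is also divided by $\parallel x(B,p)\parallel$ --- which is not the inequality \rep{lemma1} as printed, whose right-hand side carries no such normalization. The printed relative form follows from your bound only when $\parallel x(B,p)\parallel \ge 1$. This is almost certainly a transcription slip in the paper's statement rather than a defect in your argument (the cited reference states the absolute bound, and its relative versions normalize both sides consistently), but you should not claim that the division step literally yields \rep{lemma1}; state and prove the absolute bound, or note the discrepancy explicitly.
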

%\begin{remark}
%It is assumed that all components of $x$ in \rep{mlcp} have been suitably
%normalized so as to satisfy the condition $|| x^* || \leq 1$.
%\end{remark}

In the following theorem, the connection between market equilibrium and Pure Strategy Nash Equilibrium is presented.
\begin{thm}
\label{5}
Assume that all GenCos and ConCos are rational players. If $M$ is a P-matrix,
then equilibrium of the market denoted as $x^* \triangleq LCP(M,q)$ exists and is identical to  a unique Pure Strategy Nash Equilibrium.
\end{thm}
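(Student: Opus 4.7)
The plan is to decompose the statement into two sub-claims: (i) existence and uniqueness of the LCP solution $x^{*}=LCP(M,q)$, and (ii) identification of $x^{*}$ with a unique Pure Strategy Nash Equilibrium (PSNE) of the strategic game in which the GenCos and ConCos are the players and the ISO clearing condition is absorbed into the coupled constraints. Sub-claim (i) is essentially an immediate appeal to Theorem \ref{more}: since $M$ is a P-matrix, the cited result guarantees that $LCP(M,q)$ has a unique solution for every $q\in\mathbb{R}^{n}$, and this solution is precisely the vector that collects $P_{Gib}$, $P_{Djk}$, $\delta_{n}$, $\rho_{n}$, $\gamma_{nm}$ and the remaining dual variables satisfying the aggregated KKT system \rep{opt1eq}--\rep{opt11eq}. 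This directly yields the existence claim.

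For sub-claim (ii), I would invoke Theorem \ref{thm2}. First I would formalize the game: the player set is $\mathcal{I}=G_{f}\cup D_{q}$; GenCo $i$ has action $x_{i}=(P_{Gib})_{b}$ with strategy set given by \rep{maxgen}--\rep{posgeneach}, and payoff $u_{i}=\sum_{b}(\rho_{n(i)}-\lambda_{G_{ib}}^{C})P_{Gib}$ as in \rep{gencooverall}; analogously, ConCo $j$ has payoff from \rep{objd} under the constraints \rep{maxd}--\rep{nonzerod}. Each strategy set is the intersection of affine/concave inequalities, hence a convex set of the form required by Definition \ref{concave}. Because the unilateral maximization of $u_{i}$ (resp.\ $u_{j}$) for fixed $x_{-i}^{*}$ is exactly the subproblem whose KKT conditions are \rep{Gopt1eq}--\rep{Gopt3eq} (resp.\ \rep{Dopt4eq}--\rep{Dopt6eq}), any $x^{*}$ solving the aggregate LCP in particular solves each player's local optimization problem \rep{thmopt}, and is therefore a PSNE candidate.

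The non-trivial step is uniqueness and the identification with the hypothesis of Theorem \ref{thm2}, i.e.\ showing that the symmetric part $U(x)+U^{T}(x)$ of the Jacobian of $u(x)=(u_{i}(x))_{i\in\mathcal{I}}$ is negative definite on the feasible set. This is where I would use the rationality assumptions: Definition \ref{RGECO} gives $d\lambda_{G_{ib}}^{C}/dP_{G_{ib}}>0$, so the diagonal entry $\partial^{2}u_{i}/\partial P_{Gib}^{2}=-d\lambda_{G_{ib}}^{C}/dP_{G_{ib}}<0$, and Definition \ref{RCONCO} gives the analogous sign for ConCos. The off-diagonal structure is dictated by the network/LMP coupling through $\rho_{n}$, which is precisely the coupling encoded in the matrix $M$ of the LCP. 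The key observation is that the P-matrix property of $M$ transfers, block by block, to strict negative definiteness of $U(x)+U^{T}(x)$ on the tangent cone of the active constraints: intuitively, $M$ being P-matrix rules out any direction of joint deviation along which the payoff Hessian could fail to be negative-definite, while the rationality conditions fix the correct sign on the diagonal.

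The main obstacle I anticipate is making that last transfer rigorous: the matrix $M$ in \rep{mlcp} aggregates primal variables, dual variables and network Lagrangians, whereas $U(x)+U^{T}(x)$ acts only on the primal action coordinates. I would handle this by partitioning $M$ into primal, dual and network blocks, using the KKT complementarity conditions to eliminate the inactive dual components, and then showing that the reduced primal Schur complement inherits the P-matrix property (this is where the ``all principal minors positive'' characterization and Theorem 4 are invoked, in the form $(I-D+DA)$ being nonsingular for all $D\in[0,1]^{n}$). Once negative definiteness of the symmetric part is established, Theorem \ref{thm2} delivers diagonal strict concavity of the payoff tuple, hence existence of a unique PSNE coinciding with the optimizer of each player's subproblem at $x^{*}$; combined with sub-claim (i), this $x^{*}$ is both $LCP(M,q)$ and the unique PSNE, completing the proof.
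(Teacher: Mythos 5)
Your overall skeleton matches the paper's: Theorem \ref{more} gives existence and uniqueness of $x^{*}=LCP(M,q)$, Theorem \ref{thm2} gives a unique Pure Strategy Nash Equilibrium, and the two are identified. The step where you diverge from the paper, however, is precisely the step that fails. You propose to verify the hypothesis of Theorem \ref{thm2} --- negative definiteness of $U(x)+U^{T}(x)$ --- by ``transferring'' the P-matrix property of $M$ to the symmetric part of the payoff Jacobian via a Schur-complement reduction onto the primal block. That transfer is false in general: having all principal minors positive is strictly weaker than having a positive definite symmetric part. For example, $M=\bigl(\begin{smallmatrix}1 & 0\\ -10 & 1\end{smallmatrix}\bigr)$ is a P-matrix (all principal minors equal $1$), yet $M+M^{T}=\bigl(\begin{smallmatrix}2 & -10\\ -10 & 2\end{smallmatrix}\bigr)$ has eigenvalues $12$ and $-8$. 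Principal submatrices and Schur complements of P-matrices are again P-matrices, so your proposed block elimination only ever returns you to positivity of principal minors --- it can never upgrade that to definiteness of the symmetric part. The intuition that ``$M$ being P-matrix rules out any direction of joint deviation along which the payoff Hessian could fail to be negative-definite'' is exactly the implication that does not hold.

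The paper assigns the two hypotheses different jobs: the P-matrix condition is used \emph{only} on the LCP side (via Theorem \ref{more}, to get existence and uniqueness of $x^{*}$ as the collective optimizer of the GenCo, ConCo, and ISO problems), while diagonal strict concavity --- and hence the unique Nash equilibrium via Rosen's results in Theorem \ref{thm2} --- is attributed to the rationality conditions \rep{cond1} and \rep{cond2}, i.e.\ the monotonicity of marginal cost and marginal utility. The two equilibria are then identified by a uniqueness argument (phrased as a contradiction). Your sub-claim (i) and your observation that the aggregate LCP solution solves each player's local KKT system \rep{Gopt1eq}--\rep{Gopt3eq} and \rep{Dopt4eq}--\rep{Dopt6eq} are both fine and consistent with the paper; the repair is to drop the attempted P-matrix-to-definiteness transfer and instead derive the concavity hypothesis of Theorem \ref{thm2} from the rationality assumptions, which is where the paper (tersely) locates it.
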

\begin{proof}
We prove this proposition by using contradictions. That is, assume that GenCos and ConCos are rational, $M$ is a P-matrix and $x^* \triangleq LCP(M,q)$ is not a Pure Strategy Nash Equilibrium. 
Since $M$ is a P-matrix, then Theorem \ref{more} implies that  $x^* \triangleq LCP(M,q)$ exists and is the unique maximizer of the GenCo problem denoted in \rep{gencooverall} - \rep{maxgeneach}, ConCo problem denoted by \rep{objd} - \rep{mind}, and ISO problem denoted by \rep{ISO} - \rep{capacity}. That is  
\begin{equation} 
u_i(x_i,x^*_{-i}) \leq u_i(x^*_{i},x^*_{-i}) \; \forall x_{i} \in S_{i}.
\label{NE3}
\end{equation}

Since GenCos and ConCos are rational, the inequalities in \rep{cond1} and \rep{cond2} hold. From Theorem 2, it follows that the payoff functions $(u_1,...,u_I)$ are diagonally strictly concave. Theorem \ref{thm2}, in turn, implies that the game between GenCo, ConCo, and ISO has a unique Pure Strategy Nash equilibrium denoted as ${x_N}^*$. Using the Nash Equilibrium definition in \rep{NE}, it follows that 
\begin{equation} 
u_i(x_i,x^*_{-i}) \leq u_i(x^*_{N_i},x^*_{N_{-i}})\; \forall x_{i} \in S_{i}.
\label{NE2}
\end{equation}
Since we assume that $x^* \triangleq LCP(M,q)$ is not a Pure Strategy Nash Equilibrium, it implies that both $x^*$ and $x^*_N$ maximize the payoffs which contradicts the uniqueness of $x^* \triangleq LCP(M,q)$ and therefore completes the proof. 
\end{proof}
\begin{remark}
Theorem \ref{5} provides conditions under which the market has a unique equilibrium, i.e. no degeneracy can occur \cite{conejo2006} in the market if $M$ is a P-matrix.
\end{remark}

We now proceed to the effect of uncertainties and curtailment factor on the market equilibrium.
%%%%%%%%%%%%%%%%%%%%%%%%%%%%%%%%%%%%%%%%%%%%%%%%%%%%%%%%%%%%%%%
\section{Perturbation Analysis of Market Equilibrium}
\label{pert}
Using the results in the previous section,  we now present a perturbation bound for the equilibrium of the electrcial market.
As we discussed above, the market equilibrium is the solution of $LCP(M,q)$ which is denoted as $x^* $ and has been explicitly defined in Eq. \rep{mlcp}.  
As we discussed in Section II.C, the wind forecast error defined in \rep{uncergenco}, parametrized by  $\Delta_{Glb}$, and demand response defined in \rep{uncerconco}, parametrized by $\kappa_{Djk}$, are considered as two sources of perturbations. These two components of uncertainty can affect $M$ and $q$ which explicitly in our analysis are denoted by $M+\Delta M$ and $q+\Delta q$, respectively. Therefore the underlying LCP problem under parametric perturbation is altered as 
\begin{align}
%\begin{split}
& \nonumber x^T \left( (M+\Delta M) x + (q+\Delta q) \right) = 0 \\
& \nonumber x \geq 0 \\
& (M+\Delta M) x + (q+\Delta q) \geq 0
\label{mlcppert}
%\end{split}
\end{align}
This in turn leads to a corresponding equilibrium
\begin{equation}
x_\Delta^* \triangleq  LCP(M+\Delta M, q+\Delta q).
\label{xperdef}
\end{equation} 
In the rest of the paper we are looking for the maximum shift of the equilibrium.
The following definitions are useful.
\begin{defini}
Define non-dimensional perturbation parameters $\epsilon_M$ and $\epsilon_q$ as
\begin{equation}
||\Delta M|| \leq \epsilon_M \parallel M \parallel ,
\end{equation}
\begin{equation}
||\Delta q|| \leq \epsilon_q \parallel q \parallel ,
\end{equation}
a constant $\eta$ as
\begin{equation}
\eta=\epsilon_M \beta (M) \parallel M \parallel ,
\end{equation}
\end{defini}
and a set ${\cal M}$ as
\begin{equation}
{\cal M}: \big\{  \Delta M \big| \beta(M)||\Delta M|| \leq \eta \big\} 
\label{setm}
\end{equation}

We now quantify the relation between $x^*_\Delta$ in \rep{xperdef} and $x^*$ in \rep{xdef}, in the following theorem.
\begin{thm}
\label{mthm}
If the nominal market \rep{mlcp} has unique solution and $\eta < 1$, then the perturbed market in \rep{mlcppert} has a unique solution $x_\Delta^*$ and satisfies the following inequality 
\begin{equation}
\frac{\parallel   x^* - x_\Delta^* \parallel}{\parallel   x^*  \parallel} \leq \frac{2 \epsilon}{1 - \eta} \beta (M)
\label{thm}
\end{equation}
\end{thm}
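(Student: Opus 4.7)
The plan is a two-step argument: first guarantee that the perturbed market still admits a unique LCP solution by showing $M+\Delta M$ is itself a P-matrix, and then invoke the sensitivity inequality of the preceding (Chen2007) theorem and reshape it into the advertised form using Neumann-series perturbation ideas.

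For the existence and uniqueness of $x_\Delta^*$, the hypothesis that $LCP(M,q)$ has a unique solution together with Theorem \ref{more} gives that $M$ is a P-matrix. I would then verify that $M+\Delta M$ is also a P-matrix via the characterization that $A$ is a P-matrix iff $I-D+DA$ is nonsingular for every diagonal $D=\operatorname{diag}(d)$ with $0\le d_i\le 1$. Factoring
\begin{equation*}
I-D+D(M+\Delta M)=(I-D+DM)\bigl[I+(I-D+DM)^{-1}D\,\Delta M\bigr],
\end{equation*}
the first factor is invertible because $M$ is a P-matrix, and the bracket is invertible by Neumann series since $\|(I-D+DM)^{-1}D\,\Delta M\|\le \beta(M)\|\Delta M\|\le \beta(M)\epsilon_M\|M\|=\eta<1$. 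This places $\Delta M$ in the neighbourhood ${\cal M}$ defined just before the theorem, and Theorem \ref{more} then furnishes the unique $x_\Delta^*=LCP(M+\Delta M,q+\Delta q)$.

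For the quantitative bound, I would apply the sensitivity inequality with $A=M$, $B=M+\Delta M$, and $p=q+\Delta q$ to obtain
\begin{equation*}
\|x^*-x_\Delta^*\|\le \beta(M)\,\|\Delta M\,x_\Delta^*+\Delta q\|\le \beta(M)\bigl(\|\Delta M\|\,\|x_\Delta^*\|+\|\Delta q\|\bigr),
\end{equation*}
and then replace $\|x_\Delta^*\|$ using the triangle inequality $\|x_\Delta^*\|\le \|x^*\|+\|x^*-x_\Delta^*\|$. Collecting terms and using $\beta(M)\|\Delta M\|\le \eta<1$ moves a factor $1-\eta$ into the denominator while leaving $\|x^*\|$ on the right-hand side. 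Substituting $\|\Delta M\|\le \epsilon_M\|M\|$ and $\|\Delta q\|\le \epsilon_q\|q\|$ and dominating both contributions by a common non-dimensional $\epsilon$ combines the two summands into the factor of $2$, yielding $\frac{\|x^*-x_\Delta^*\|}{\|x^*\|}\le \frac{2\epsilon}{1-\eta}\beta(M)$.

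The main obstacle is the reshaping step after the Chen2007 inequality: the raw bound carries $\|x_\Delta^*\|$ in its denominator, and moving to $\|x^*\|$ without blowing up the constant is only possible because $\beta(M)\|\Delta M\|\le \eta<1$. The same condition $\eta<1$ is what drives the Neumann series in Step 1, so the threshold is sharp in the sense that as $\eta\uparrow 1$ both the P-matrix preservation and the sensitivity estimate degenerate simultaneously; the remaining work is routine norm bookkeeping and clean invocations of the two preceding theorems.
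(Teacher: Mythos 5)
Your first step --- showing $M+\Delta M$ remains a P-matrix via the factorization $I-D+D(M+\Delta M)=(I-D+DM)\bigl(I+(I-D+DM)^{-1}D\,\Delta M\bigr)$ and the Neumann-series argument with $\beta(M)\|\Delta M\|\le\eta<1$ --- is exactly the paper's argument, down to the definition of $M_0=(I-D+DM)^{-1}D$. The divergence, and the problem, is in your second step. The sensitivity inequality of Theorem 2 is a \emph{relative} bound: with your assignment $A=M$, $B=M+\Delta M$, $p=q+\Delta q$ it reads
\begin{equation*}
\frac{\|x^*-x_\Delta^*\|}{\|x_\Delta^*\|}\;\le\;\beta(M)\,\|\Delta M\,x_\Delta^*+\Delta q\|,
\end{equation*}
whereas you transcribed it without the $\|x_\Delta^*\|$ in the denominator. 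That omission is what makes your subsequent bookkeeping look routine. If you restore the denominator, the right-hand side still contains $\|x_\Delta^*\|$ inside the residual, so after substituting $\|x_\Delta^*\|\le\|x^*\|+\|x^*-x_\Delta^*\|$ you obtain an inequality that is \emph{quadratic} in $\|x^*-x_\Delta^*\|$, and the clean absorption of a single factor $(1-\eta)$ into the denominator no longer goes through as described.

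The paper sidesteps this entirely by making the opposite assignment: $B=M$, $p=q$ (so the normalization on the left is $\|x(B,p)\|=\|x^*\|$, which is what the theorem's conclusion wants) and $A=M+\Delta M$ (so the residual is $\Delta M\,x^*+\Delta q$, expressed in the \emph{nominal} solution). The price is that the constant becomes $\beta(M+\Delta M)$, which must then be controlled; the paper does this with the same factorization from step one, giving $\beta(M+\Delta M)\le\beta(M)/(1-\eta)$ --- this is where the $1/(1-\eta)$ actually comes from, not from a triangle-inequality bootstrap. So the factor you attribute to absorbing $\|x_\Delta^*\|$ is, in the correct proof, a perturbation bound on $\beta$ itself. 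I'd note that the final passage from $\beta(M)\bigl(\|\Delta M\|\,\|x^*\|+\|\Delta q\|\bigr)/(1-\eta)$ to $2\epsilon\,\beta(M)/(1-\eta)$ with $\epsilon=\max\{\epsilon_M\|M\|,\epsilon_q\|q\|\}$ is equally loose in the paper and in your write-up (both quietly suppress the $\|x^*\|$ multiplying the $\Delta M$ term), so that last fudge is not held against you; the substantive issue is the swapped roles of $A$ and $B$ and the dropped normalization in the Chen--Xiang inequality.
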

\begin{proof}
If the nominal electrical market \rep{mlcp} has a unique solution according to Theorem 1, the matrix M is P-matrix. This in turn, according to Theorem 2, implies that $\left( I-D+DM \right) $ is nonsingular for any diagonal matrix $D=diag (d_i)$ with $0 \leq d_i \leq 1$, $i= 1,2, \ldots, n$. We have the following equality
\begin{equation}
\begin{split}
\big( I-D+D(M+ \Delta M) \big) = \left( I-D+DM \right) \left(I+M_0 \Delta M \right) 
\end{split}
\label{ns}
\end{equation}
where
\begin{equation}
M_0 = (I-D+DM)^{-1}D.
\label{m0}
\end{equation}
Noting the definition of $\beta(M)$, it follows that 
\begin{align}
%\begin{split}
& ||M_0 \Delta M|| \leq \beta(M)||\Delta M||  \leq \eta \;  \forall  \Delta M \in {\cal M} 
%\end{split}
\end{align}
Since $\eta < 1$ according to theorem 3, it follows that $I+ M_0 \Delta M$ is nonsingular for all $\Delta M \in {\cal M}$. We therefore conclude from \rep{ns} that $\big( I-D+D(M+ \Delta M) \big) $ is a P-matrix and therefore the market in \rep{mlcppert} has a unique solution for all $  \Delta M \in {\cal M} $ and $\eta <1$.

We now show that $x^{*}_{\Delta}$ satisfies \rep{thm}. Using \rep{lemma1} in Theorem 2, we have that:
\begin{equation}
\begin{split}
\frac{\parallel x^*  - x_\Delta^*   \parallel}{\parallel   x^* \parallel}  \leq 
\beta(M+\Delta M) ||\Delta M x^*   + \Delta q||
\label{upbound}
\end{split}
\end{equation}
We rewrite the argument of  $\beta(M+\Delta M)$ as
\begin{align}
%\begin{split}
&\nonumber \big( I-D+D(M+ \Delta M) \big)^{-1}D= \\
&\big(I+M_0(\Delta M)\big) ^{-1} (I-D+DM)^{-1}D
\label{betaper}
\end{align}
Using Appendix 1 in \cite{kianiisgt2010}, we have:
\begin{equation}
\begin{split}
|| \big(I+M_0(\Delta M)\big) ^{-1} || \leq
 \frac{1}{1-\beta (M) ||\Delta M||} \leq \frac{1}{1-\eta}
\end{split}
\end{equation}
Taking norms on both sides of \rep{betaper}, we obtain that:
\begin{equation}
\beta (M + \Delta M) \leq \frac{1}{1- \eta} \beta (M)
\label{upbeta}
\end{equation}
Using  \rep{upbound} implies that:
\begin{equation}
\begin{split}
\frac{ \parallel x^*  - x_\Delta^*  \parallel}{\parallel   x^* \parallel}  \leq 
 \frac{1}{1- \eta} \beta (M) ||\Delta M   +\Delta q ||
\end{split}
\end{equation}
Considering  the definition of $\Delta M$ and $\Delta q$, we obtain, in turn,
\begin{equation}
\begin{split}
\frac{ \parallel x^*  - x_\Delta^*  \parallel}{\parallel   x^* \parallel}  \leq \frac{2 \epsilon}{1- \eta} \beta (M)
\end{split}
\end{equation}
where $\epsilon = max \{ \epsilon_M \parallel M \parallel, \epsilon_q \parallel q \parallel \}$ which is the desired bound.
\end{proof}
\begin{remark}
\label{remark1}
Defining $\mu(\Delta_{Glb},\kappa_{Djk})$ for all $l\in G_w$, $b=1, \ldots,N_{Gi}$, $j\in D_q$, $ k=1, \ldots,N_{Dj}$ as 
\begin{equation} 
\mu(\Delta_{Glb},\kappa_{Djk}) = \frac{2 \epsilon}{1- \eta} \beta (M),
\label{mu}
\end{equation}
Theorem \ref{mthm} implies that the uncertainty in market can lead to a maximum shift by the equilibrium of an amount $\mu(\Delta_{Glb},\kappa_{Djk})$. As this function is nonlinear, determining an analytical relationship between $\mu(\Delta_{Glb},\kappa_{Djk})$,  $\Delta_{Glb}$ and $\kappa_{Djk}$ is exceedingly difficult. As will be shown in the next section, simulation studies show that as $\kappa_{Djk}$ increases, $\mu$ decreases. This in turn brings the perturbed equilibrium closer to the nominal equilibrium.
\end{remark}

\subsection{Game theoretic Interpretations}
We now evaluate the perturbed market using tools from game theory.
Consider two strategic games defined by two profiles of utility functions as $ G = \langle \mathcal{I}, (S_i)_{i \in \mathcal{I}} , (u_i)_{i \in \mathcal{I}} \rangle$ and $ \tilde{G} = \langle \mathcal{I}, (S_i)_{i \in \mathcal{I}} , (\tilde{u}_i)_{i \in \mathcal{I}} \rangle$. If $s^*_i$ is a Nash equilibrium of $G$, then $s^*_i$ need not be a Pure Strategy Nash Equilibrium of $\tilde{G}$. The equilibria of the strategic games $G$ and $\tilde{G}$ may be far apart, even if $(u_i)_{i \in \mathcal{I}}$ and $(\tilde{u}_i)_{i \in \mathcal{I}}$ are very close to each other. 
\begin{defini}
\label{epsilon}
Given $\epsilon \geq 0$, a Pure Strategy Nash Equilibrium $s^* \in S$ is called an $\epsilon-\text{equilibrium}$ if for all $i \in \mathcal{I}$ and $s_i \in S_i$,
\begin{equation}
u_i(s_i,s^*_{-i}) \leq u_i(s^*_i,s^*_{-i})+ \epsilon  \; \forall s_{i} \in S_{i}.
\end{equation}
\end{defini}
Obviously in Definition \ref{epsilon}, when $\epsilon = 0$, the $\epsilon-\text{equilibrium}$ is a Pure Strategy Nash Equilibrium in the sense of Definition \ref{ne}. 

The following definition formally defines the closeness of two strategic form games.
\begin{defini}
\label{alpha}
Let $ G = \langle \mathcal{I}, (S_i)_{i \in \mathcal{I}} , (u_i)_{i \in \mathcal{I}} \rangle$ and $ \tilde{G} = \langle \mathcal{I}, (S_i)_{i \in \mathcal{I}} , (\tilde{u}_i)_{i \in \mathcal{I}} \rangle$ be two strategic form games, then $\tilde{G}$ is a $\alpha-\text{approximation}$ of $G$ if for all $i \in \mathcal{I}$,
\begin{equation}
|u_i(s) - \tilde{u}_i(s)| \leq \alpha \; \forall s \in S.
\end{equation}
\end{defini}
The next proposition relates the $\epsilon-\text{equilibrium}$ of close games as defined in \ref{alpha}.
\begin{thm}
\label{epsclos}
If $ \tilde{G} = \langle \mathcal{I}, (S_i)_{i \in \mathcal{I}} , (\tilde{u}_i)_{i \in \mathcal{I}} \rangle$ is an $\alpha-\text{approximation}$ to $G = \langle \mathcal{I}, (S_i)_{i \in \mathcal{I}} , (u_i)_{i \in \mathcal{I}} \rangle$ and $s^* \in S$
is an equilibrium 
%$\epsilon-\text{equilibrium}$ 
of $ \tilde{G}$, then $s^* \in S$ is a $(2\alpha)-\text{equilibrium}$ of $G$.
\end{thm}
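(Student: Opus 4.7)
The plan is to deduce the conclusion by a direct triangle-style chaining of three inequalities: two supplied by the $\alpha$-approximation hypothesis (Definition \ref{alpha}) and one supplied by the fact that $s^*$ is a Nash equilibrium of $\tilde{G}$ (Definition \ref{ne}). Fix an arbitrary player $i \in \mathcal{I}$ and an arbitrary deviation $s_i \in S_i$; the goal is to bound $u_i(s_i, s^*_{-i}) - u_i(s^*_i, s^*_{-i})$ from above by $2\alpha$, which is exactly the $\epsilon$-equilibrium condition in Definition \ref{epsilon} with $\epsilon = 2\alpha$.

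First I would apply the $\alpha$-approximation bound to the profile $(s_i, s^*_{-i})$ to get $u_i(s_i, s^*_{-i}) \le \tilde{u}_i(s_i, s^*_{-i}) + \alpha$. Next, since $s^*$ is a (pure strategy) Nash equilibrium of $\tilde{G}$, the definition gives $\tilde{u}_i(s_i, s^*_{-i}) \le \tilde{u}_i(s^*_i, s^*_{-i})$. Finally, a second application of the $\alpha$-approximation bound, this time at the profile $(s^*_i, s^*_{-i})$, yields $\tilde{u}_i(s^*_i, s^*_{-i}) \le u_i(s^*_i, s^*_{-i}) + \alpha$. Concatenating the three inequalities gives
\begin{equation*}
u_i(s_i, s^*_{-i}) \;\le\; u_i(s^*_i, s^*_{-i}) + 2\alpha,
\end{equation*}
and because $i \in \mathcal{I}$ and $s_i \in S_i$ were arbitrary, this is precisely the statement that $s^*$ is a $(2\alpha)$-equilibrium of $G$.

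There is really no hard step here; the only subtlety worth flagging is that the $\alpha$-approximation bound in Definition \ref{alpha} is two-sided (it bounds the absolute difference), so I must use one direction at the deviation profile $(s_i, s^*_{-i})$ and the opposite direction at the equilibrium profile $(s^*_i, s^*_{-i})$ in order for the two $\alpha$ terms to add constructively rather than cancel. Everything else is bookkeeping, and the factor of $2$ in $(2\alpha)$-equilibrium arises precisely from incurring one approximation error on each side of the Nash inequality in $\tilde{G}$.
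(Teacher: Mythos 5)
Your proof is correct and is essentially the same argument as the paper's: the paper telescopes $u_i(s_i,s^*_{-i}) - u_i(s^*_i,s^*_{-i})$ into three differences, drops the middle one using the Nash property of $\tilde{G}$, and bounds the other two by $\alpha$ each, which is exactly your chain of three inequalities written additively. No substantive difference.
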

\begin{proof}
For all  $i \in \mathcal{I}$ and all $s_i \in S_i$, we can write
\begin{equation}
\begin{aligned}
u_i(s_i,s^*_{-i}) - u_i(s^*_i,s^*_{-i}) = u_i(s_i,s^*_{-i}) -  \tilde{u}_i(s_i,s^*_{-i}) + \\
\tilde{u}_i(s_i,s^*_{-i})  -\tilde{u}_i(s^*_i,s^*_{-i}) + \\
\tilde{u}_i(s^*_i,s^*_{-i}) - u_i(s^*,s^*_{-i}).
\end{aligned}
\end{equation}
Since $s^*$ is a Pure Nash Equilibrium of $\tilde{G}$, then $\tilde{u}_i(s_i,s^*_{-i})  -\tilde{u}_i(s^*_i,s^*_{-i}) \leq 0$. This in turn follows that 
\begin{equation}
\begin{aligned}
u_i(s_i,s^*_{-i}) - u_i(s^*_i,s^*_{-i}) \leq |u_i(s_i,s^*_{-i}) -  \tilde{u}_i(s_i,s^*_{-i})| + \\
|\tilde{u}_i(s^*_i,s^*_{-i}) - u_i(s^*,s^*_{-i})|.
\end{aligned}
\end{equation}
Using Definitions \ref{alpha} and \ref{epsilon}, it follows that
\begin{equation}
\begin{aligned}
u_i(s_i,s^*_{-i}) - u_i(s^*_i,s^*_{-i}) \leq 2\alpha
\end{aligned}
\end{equation}
which in turn implies that $s^* \in S$ is an $(2\alpha)-\text{equilibrium}$ of $G$.
\end{proof}
\begin{pro}
\label{lem1}
Assume that the perturbed market denoted by the strategic game $\tilde{G}$ under uncertainties $\Delta_{Glb}$ and $\kappa_{Djk}$ is an $\alpha-\text{approximation}$ of the nominal market denoted by $G$. If $x_\Delta^*$ is an $\text{equilibrium}$ of the perturbed market $\tilde{G}$, then $x_\Delta^*$ is an $(2\alpha)-\text{equilibrium}$ of the nominal market $G$.
\end{pro}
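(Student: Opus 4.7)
The plan is to recognize that this proposition is essentially a specialization of Theorem \ref{epsclos} to the specific pair of games arising in our market setting, so the task reduces to verifying that the hypotheses of Theorem \ref{epsclos} are satisfied in this context. First I would identify the two strategic games: $G$ is the nominal market whose players are the GenCos, ConCos, and ISO with payoff functions derived from \rep{gencooverall}, \rep{objd}, and \rep{ISO}, and $\tilde{G}$ is the perturbed market whose payoff functions are altered by the wind forecast error $\Delta_{Glb}$ entering through \rep{windoverall}-\rep{windcost} and the curtailment factor $\kappa_{Djk}$ entering through \rep{uncerconco}. By the hypothesis of the proposition, $\tilde{G}$ is an $\alpha$-approximation of $G$ in the sense of Definition \ref{alpha}.

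Next I would invoke Theorem \ref{5} in conjunction with Theorem \ref{mthm} to justify viewing $x_\Delta^*$ as a Pure Strategy Nash Equilibrium of $\tilde{G}$: the perturbed market has a unique solution $x_\Delta^* \triangleq LCP(M+\Delta M, q+\Delta q)$ under the conditions of Theorem \ref{mthm}, and by the rationality of GenCos and ConCos together with the P-matrix property being preserved in a neighborhood of $M$ (Theorem \ref{more}), this LCP solution coincides with the unique Pure Strategy Nash Equilibrium of $\tilde{G}$. Thus both hypotheses of Theorem \ref{epsclos} are met with $s^* = x_\Delta^*$.

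Applying Theorem \ref{epsclos} directly then yields that $x_\Delta^*$ is a $(2\alpha)$-equilibrium of the nominal market $G$, which is precisely the claim. The main obstacle, if any, is not the inequality manipulation (that work has already been done in the proof of Theorem \ref{epsclos} by adding and subtracting $\tilde{u}_i$ terms and exploiting the $\alpha$-approximation bound together with the equilibrium inequality for $\tilde{G}$), but rather the careful identification of the LCP-based equilibrium $x_\Delta^*$ with the strategic-game equilibrium required by Theorem \ref{epsclos}; once that identification is made via Theorems \ref{5} and \ref{mthm}, the proposition follows as an immediate corollary.
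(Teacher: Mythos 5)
Your proposal is correct and matches the paper's intent: the paper states Proposition \ref{lem1} without an explicit proof, treating it as an immediate instantiation of Theorem \ref{epsclos} with $G$ and $\tilde{G}$ taken to be the nominal and perturbed markets and $s^* = x_\Delta^*$. Your additional step of invoking Theorems \ref{mthm} and \ref{5} to justify identifying the LCP solution $x_\Delta^*$ with a Pure Strategy Nash Equilibrium of $\tilde{G}$ is a sensible (and slightly more careful) filling-in of a hypothesis the paper leaves implicit, not a different route.
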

%\begin{proof}
%Proof follows by using  Proposition \ref{epsclos}.
%\end{proof}
\begin{remark}
If the wind forecast error $\Delta_{Glb}$ increases, from Definition  \ref{alpha}, it follows that $\alpha$ will increase as the relative payoffs of the corresponding generators will have an increased error. Proposition \ref{lem1} implies that in such a case, the equilibrium of the perturbed
game $\tilde{G}$ is correspondingly far away from the nominal market $G$. It should be noted that as the forecast error increases, the corresponding cost of deploying ancillary services increases as well. If a Demand Response program is in place, this cost increase is conveyed to the consumer, leading to a decrease in the load quantified by the demand curtail factor $\kappa_{Djk}$ as in \rep{uncerconco}. Similar to our observation in Remark \ref{remark1}, it should be noted that the exact impact of an increasing  $\kappa_{Djk}$ on $\alpha$ is difficult to quantify due to its nonlinearity. However, simulation studies show, as discussed in Section \ref{sim}, that as $\kappa_{Djk}$ increases, $\alpha$ decreases. This in turn brings the perturbed equilibrium closer to the nominal equilibrium.
\end{remark}

\section{Case Study}
\label{sim}
An IEEE 30-bus case is used for simulation studies, whose interconnections are shown in Figure \ref{fig:market}. The size and price of each block of each GenCo are shown in Table  \ref{table_property1} and for the sake of simplicity we assume that each GenCo bids in its marginal cost.
The size of minimum power requirement of each demand and the size and price of each block of each demand are shown in Table \ref{table_property2}. We assume that only part of the overall loads are participating in the market, and the fixed loads in other buses are shown in Table \ref{fixed-load}.
The reactance $B_{nm}$ of the line connecting bus $n$ and bus $m$ can be found in Table \ref{table_property3}. The transmission capacity limit of  all lines is chosen to be $100$$MW$. The line parameters are perunit with three-phase base of 230 kV and 10 MVA.
\begin{table}[btp]
\caption{Cost functions data of Generators}
\label{table_property1}
\begin{center}
\scalebox{0.9} 
{\begin{tabular}{|c|c c|c c|}
\hline
Name&  Block 1& & Block 2& \\
& Size$^{(MW)}$& Price$^{(\$/ MW)}$& Size$^{(MW)}$& Price$^{(\$/ MW)}$\\
\hline
$P_{g_1}$& 12.5 & 240 & 40 & 900 \\\hline
$P_{g_2}$&  12.7 & 300 & 40  & 950  \\\hline
$P_{g_5}$&  12.2 & 240 & 80  & 1200  \\\hline
$P_{g_{11}}$& 12.2  & 240 & 80  & 1200  \\\hline
$P_{g_{13}}$& 	10 & 40 & 20  & 80  \\
\hline
\end{tabular}}
\end{center}
\end{table}

\begin{table}[btp]
\caption{Cost functions data of Consumers}
\label{table_property2}
\begin{center}
{\begin{tabular}{|c|c c|c c|}
\hline
Name&  Block 1& & Block 2& \\
& Size$^{(MW)}$& Price$^{(\$/ MW)}$& Size$^{(MW)}$& Price$^{(\$/ MW)}$\\
\hline
$P_{d_7}$& 11 & 150 &  19 & 600 \\
  \hline 
$P_{d_{15}}$&10 & 100 &  21 & 600  \\
\hline
$P_{d_{30}}$&12 & 170 & 22  &  600 \\
\hline 
$P_{d_{9}}$&12 & 150 & 24  &  600 \\
\hline
$P_{d_{26}}$&15.5 & 150 & 21  &  600 \\
\hline
$P_{d_{27}}$&9.5 & 150 & 22  &  600 \\
\hline 
\end{tabular}}
\end{center}
\end{table}
\begin{table}[btp]
\caption{Fixed load data}
\label{fixed-load}
\begin{center}
{\begin{tabular}{|c|c ||c|c|}
\hline
Name&  Demand & Name & Demand \\
%& Size$^{(MW)}$& Price$^{(\$/ MW)}$& Size$^{(MW)}$& Price$^{(\$/ MW)}$\\
\hline
$P_{d_3}$& 22.31 & $P_{d_{14}}$ & 7.20 \\
 \hline 
$P_{d_4}$& 8.83& $P_{d_{16}}$ & 4.06 \\
\hline 
$P_{d_8}$& 13.95&$P_{d_{17}}$ & 10.46 \\
 \hline 
$P_{d_{10}}$& 6.74& $P_{d_{18}}$ & 3.72 \\ 
\hline
$P_{d_{12}}$& 13.01 & $P_{d_{19}}$ & 11.04 \\ 
\hline 
$P_{d_{20}}$& 2.55 & $P_{d_{21}}$ & 3.39 \\ 
\hline
$P_{d_{23}}$& 22.31 & $P_{d_{24}}$ & 10.11 \\ 
\hline
$P_{d_{26}}$& 4.06 & $P_{d_{29}}$ & 2.78 \\ 
\hline
\end{tabular}}
\end{center}
\end{table}

\begin{table}[]
\caption{Transmission Lines Reactance; Line reactances are perunited based on $100 MW$}
\label{table_property3}
\begin{center}
\begin{tabular}{|c|c|c||c|c|c|}
\hline
 \multicolumn{2}{|c|}{Connected} & \multirow{2}*[-1.2ex]{Reactance} & \multicolumn{2}{|c|}{Connected} & \multirow{2}*[-1.2ex]{Reactance}\\
 \multicolumn{2}{|c|}{Bus} & & \multicolumn{2}{|c|}{Bus} &\\\cline{1-2}\cline{4-5}
 Bus & Bus & \multirow{2}*[1.2ex]{($x_{b\_k}$)} &Bus & Bus & \multirow{2}*[1.2ex]{($x_{b\_k}$)}\\
 b & k &&b & k &\\\hline
 1 &	2& 0.0575 & 1 &	3&	0.1652 \\
2 &	4& 	0.1737 &3&	4&	0.0379 \\
2 &	5&	0.1983&2	&6	&0.1763\\
4 &	6&	0.0414& 5&	7&	0.116\\
6 &	7&	0.082&6&	8&	0.042 \\
6 &	9&	0.208&6&	10&	0.556\\
9 &	11&	0.208& 9&	10	&	0.11\\
4	&12&	0.256& 12&	13&	0.14\\
12 &	14&	0.2559& 12&	15&		0.1304\\
12 &	16 &	0.1987&14 &	15&	0.1997\\
16 &	17&		0.1923& 15 &18 &		0.2185\\
18&	19&0.1292& 19&	20&	0.068\\
10 &	20&	0.209&10&	17&0.0845\\
10	&21& 0.0749&10&	22&0.1499\\
21 &	22	&	0.0236&15&	23&	0.202\\
22 &	24&		0.179&23&	24&	0.27\\
24 &	25&		0.3292&25&	26&	0.38\\
25 &	27&		0.2087&28 &	27&	0.396\\
27 &	29&		0.4153&27	&30	&	0.6027\\
29&	30&		0.4533&8&	28&	0.2\\
6&	28&		0.0599&&&\\ \hline
 \end{tabular}
\end{center}
\end{table}
%%%%Figure
\begin{figure}[ht]
\centering	
\includegraphics[totalheight=10cm]{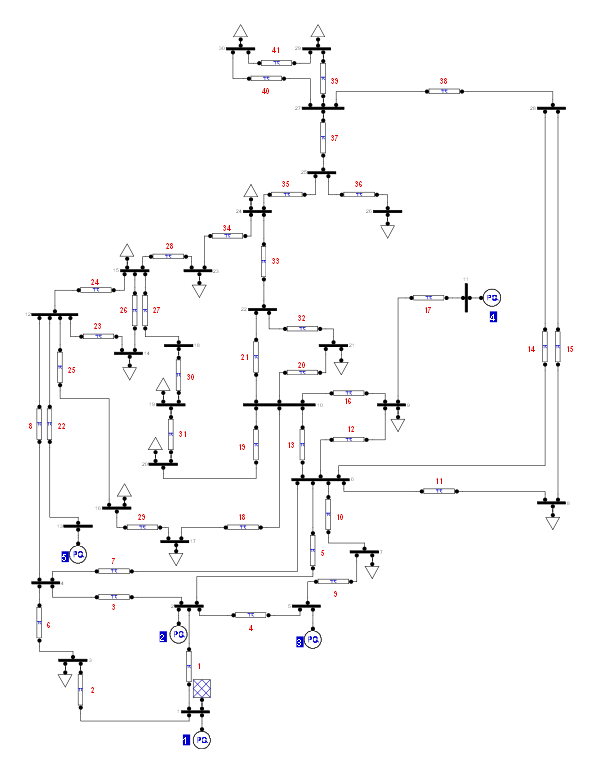}
\caption{IEEE 30-bus case study}
\label{fig:market}
\end{figure}
%We assume that GenCo in bus $2$, is committed as a reserve generator and start-up and shut-down costs are internalized in its associated cost function $C_{G_{2}}$.
%We introduce both a wind generator and a dispatchable load that responds to real-time price changes, with the former introduced in bus 13, and the latter in bus 15. The effects of these two entities are quantified as $\Delta_{G13}$ and $\kappa_{D15}$ respectively. 
%We also vary $P^w_{Gk_{min}}$ in our simulation studies so that the effect of wind penetration $x^w$, defined in \rep{pen}, can be analyzed as well.
\subsection{Nominal Market Equilibrium}
Table \ref{table_resultGenCo} provides equilibrium results concerning generator output, revenue, and profit for the
electric power market if no wind shortfalls are imposed to the system. These results are
obtained by directly solving LCP problem \rep{opt1eq}-\rep{opt11eq}.
Table \ref{table_resultConCo} shows the power consumed and the corresponding demand payments. 
Since we ignore losses in this study, the difference between the total generator power output and the total power consumed, and are equal to zero and both are $221^{MW}$ whereas $151^{MW}$ of the load is a fixed demand as can be seen in Table \ref{fixed-load}.
The Locational Marginal Price in all of the buses is $23.62^{\$/MWh}$ due to the fact that transmission lines are not congested.
\begin{table}[btp]
\caption{Results of market equilibrium for GenCo, no wind uncertainty}
\label{table_resultGenCo}
\begin{center}
\scalebox{0.9} 
{\begin{tabular}{c| c c c c }
\hline
Name& Power output$^{MW}$ & Revenues$^{\$/h}$ & Cost$^{\$/h}$ & Total Profit$^{\$/h}$ \\
%& Size$^{(MW)}$& Price$^{(\$/ MW)}$& Size$^{(MW)}$& Price$^{(\$/ MW)}$\\
\hline
$P_{g_1}$& 30 &  708.6 &  660 &  48.6  \\\hline
$P_{g_2}$& 11.64 & 274.93 & 274.76 &  0.18    \\\hline
$P_{g_5}$&  80 & 1889.6 & 1000 & 889.6   \\\hline
$P_{g_{11}}$& 80  & 1889.6 & 1000 & 889.6  \\\hline
$P_{g_{13}}$& 20 & 472.4 &8 & 464.4  \\
\hline
\end{tabular}}
\end{center}
\end{table}

\begin{table}[btp]
\caption{Results of market equilibrium for ConCo, no wind uncertainty}
\label{table_resultConCo}
\begin{center}
{\begin{tabular}{c|c c c }
\hline
Name&  Power consumed & Total Payment $^{\$/h}$\\
\hline
%& Size$^{(MW)}$& Price$^{(\$/ MW)}$& Size$^{(MW)}$& Price$^{(\$/ MW)}$\\
$P_{d_7}$& 11& 259.82 \\
  \hline 
$P_{d_{15}}$ & 12 &  283.44 \\
\hline
$P_{d_{30}}$&10 &   236.2 \\
\hline 
$P_{d_{9}}$&15.5 &   366.1 \\
\hline
$P_{d_{26}}$&9.5 &  224.4 \\
\hline
$P_{d_{27}}$& 12.0 &  283.44 \\
\hline 
\end{tabular}}
\end{center}
\end{table}

\subsection{Perturbed Market Equilibrium with Wind Uncertainty}
We assumed that GenCo at bus $13$ in figure \ref{mu_uncertainty}, is wind based and is subjected to an uncertainty  $\Delta_{G13}$. For the sake of simplicity in simulations we assume that uncertainty for all blocks are the same and characterized by  $\Delta_{G13}$. The resulting perturbed market, discussed in Section \ref{pert}, was simulated.
Figure \ref{mu_uncertainty} shows the corresponding market equilibrium shift $\mu(\Delta_{Glb},\kappa_{Djk})$ as define in \rep{mu} for different wind penetration.

%%% Figure 2 Here %%%
\begin{figure}[t]
\centering	
\includegraphics[totalheight=7cm]{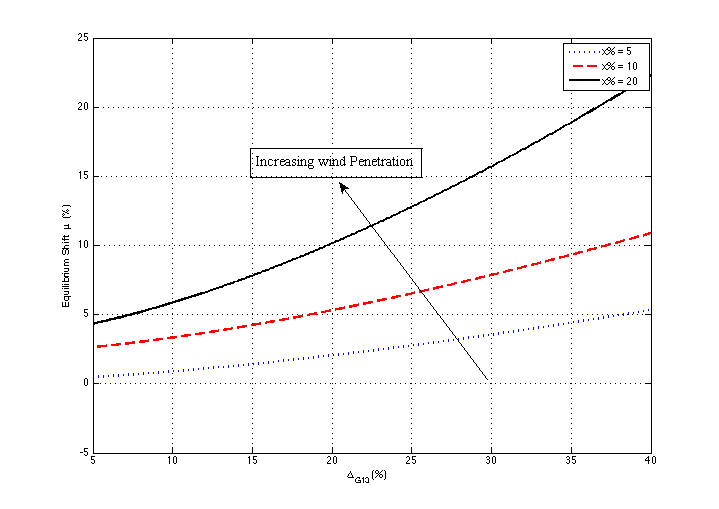}
\caption{Market equilibrium shift as a function of uncertainty  and wind penetration}
\label{mu_uncertainty}
\end{figure}

As can be seen in Fig \ref{mu_uncertainty}, with increasing $\Delta_{G13}$, higher equilibrium shift $\mu$ results. As wind penetration $x^w \%$ defined in \rep{pen} increases, the corresponding market equilibrium shift deviation is higher.

\subsection{Measure of Uncertainty incorporating Wind Uncertainty With Demand Response}
We now introduce a perturbation $\kappa_{D15}$ into the picture to represent demand response which correspond to introduce Real Time Pricing (RTP) at the consumer at bus $15$. We now numerically evaluate the effect of this perturbation parameter. In particular, we evaluate the equilibrium shift $\mu(\Delta_{Glb},\kappa_{Djk})$, for a range of $\kappa_{D15}$ at different level of wind forecast error in $\Delta_{G13}$. Figure \ref{mu_kapa} shows the corresponding results, which clearly demonstrate that RTP can reduce the equilibrium shift and therefore the cost of uncertainty in the presence of a wind forecast error.

%%% Figure 3 Here %%%%
\begin{figure}[t]
\centering	
\includegraphics[totalheight=6.7cm]{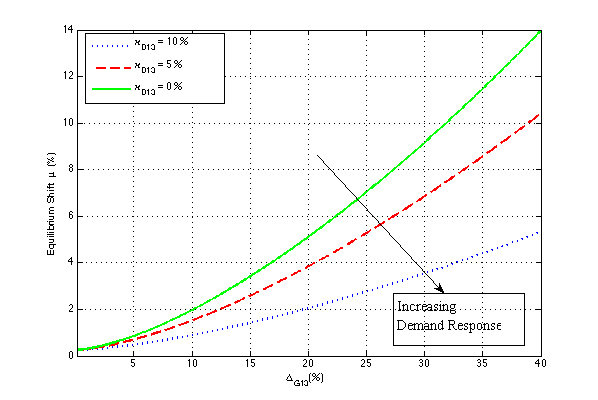}
\caption{Market equilibrium shift as a function of uncertainty and demand curtailment factor}
\label{mu_kapa}
\end{figure}

\section{Concluding Remarks}
\label{conc}
The current energy crisis has created an urgent need in integrating
renewable energy resources into the power grid. The latter in turn can introduce intermittency and uncertainty into the picture, thereby introducing a prohibitive integration cost. In this paper, we introduce an analytical framework to evaluate this cost. We begin with an overall model of the energy market including Generators Company, Consumers Company as well as Independent System Operator
(ISO). We analyze the underlying market equilibrium using game theory and established sufficient conditions are derived for the existence of a unique Pure Nash Equilibrium for the nominal market, then the effect of uncertainty due to renewable
energy on this equilibrium using  perturbation analysis. The perturbed market is analyzed using the concept of closeness of two strategic games and the equilibria of close games using the notion of $\alpha-\text{approximation}$ and $\epsilon-\text{equilibrium}$. 
This analysis is used to quantify the effect of uncertainty of RERs and its possible mitigation using Demand Response in the form of
real time pricing, we quantify its effect on the market equilibrium using a parameter denoted as Curtailment Factor. We show that the equilibrium shift caused by the RER uncertainty is mitigated in the presence of RTP due to the Curtailment Factor.  Numerical results are included that validate the theoretical results, using an IEEE 30-bus network.
%\input{ack}
%\section{bib}

%\input{appendix}
%%%%%%%%%%%%%%%%%%%

\end{document}